\newtheorem{theorem}{Theorem}
\newtheorem{corollary}[theorem]{Corollary}
\newtheorem{lemma}[theorem]{Lemma}
\title
{On $r$-equitable chromatic threshold of Kronecker products of complete graphs\thanks{Supported by the
National Natural Science Foundation of China (No.\,11301410).}}
\author
{
Wei Wang$^{\rm a}$,
Zhidan Yan$^{\rm a}$,
Xin Zhang$^{\rm b}$\thanks{Corresponding author: xzhang@xidian.edu.cn.}
\\
{\footnotesize$^{\rm a}$College of Information Engineering, Tarim University, Alar 843300, P. R. China}\\
{\footnotesize$^{\rm b}$Department of Mathematics, Xidian University, Xi'an 710071, P. R. China}
}
\date{}
\begin{document}

 \maketitle
\begin{abstract}\baselineskip 0.60cm
A graph $G$ is $r$-equitably $k$-colorable if its vertex set can be partitioned into $k$ independent sets,
any two of which differ in size by at most $r$. The $r$-equitable chromatic threshold of a graph $G$, denoted by $\chi_{r=}^*(G)$, is the minimum $k$ such that $G$ is $r$-equitably $k'$-colorable for all $k'\ge k$. Let $G\times H$ denote the Kronecker product of graphs $G$ and $H$. In this paper, we completely determine the  exact value of $\chi_{r=}^*(K_m\times K_n)$ for general $m,n$ and $r$. As a consequence, we show that for $r\ge 2$, if $n\ge \frac{1}{r-1}(m+r)(m+2r-1)$ then $K_m\times K_n$ and its spanning supergraph $K_{m(n)}$ have the same $r$-equitable colorability, and in particular $\chi_{r=}^*(K_m\times K_n)=\chi_{r=}^*(K_{m(n)})$, where $K_{m(n)}$ is the complete $m$-partite graph with $n$ vertices in each part.\\[.2em]
{\bf Keywords:} Equitable coloring, $r$-Equitable coloring, $r$-Equitable chromatic threshold, Kronecker product, Complete graph.
\end{abstract}

\section{Introduction}\baselineskip 0.60cm
All graphs considered in this paper are finite, undirected and simple. Let $G$ be a graph with vertex set $V(G)$ and edge set $E(G)$. For a positive integer $k$, let $[k]=\{1,2,\ldots,k\}$. A (proper)  {\em $k$-coloring} of $G$ is a mapping $f:V(G) \to [k]$ such that
$f(x)\neq f(y)$ whenever $xy\in E(G)$. The {\em chromatic number} of $G$, denoted by $\chi(G)$, is the smallest integer $k$ such that $G$ admits a $k$-coloring. We call the set $f^{-1}(i)=\{x\in V(G)\colon f(x)=i\}$ a {\em color class} for each $i\in [k]$. Notice that each color class in a proper coloring is an independent set, i.e., a subset of $V(G)$ of pairwise non-adjacent vertices,
and hence a $k$-coloring is a partition of $V(G)$ into $k$ independent sets. For a fixed positive integer $r$,
an {\em $r$-equitable $k$-coloring} of $G$ is a $k$-coloring for which any two color classes differ in size by at most $r$.
A graph is {\em $r$-equitably $k$-colorable} if it has an $r$-equitable $k$-coloring.
The {\em $r$-equitable chromatic number} of $G$, denoted by $\chi_{r=}(G)$,
is the smallest integer $k$ such that $G$ is $r$-equitably $k$-colorable.
For a graph $G$, the {\em $r$-equitable chromatic threshold} of $G$, denoted by $\chi_{r=}^*(G)$,
is the smallest integer $k$ such that $G$ is $r$-equitably $k'$-colorable for all $k'\ge k$.
Although the concept of $r$-equitable colorability seems a natural generalization of usual equitable colorability (corresponding to $r$=1)  introduced by Meyer \cite{meyer1973} in 1973, it was first proposed in a recent paper by Hertz and Ries~\cite{hertz2011}, where the authors generalized the characterizations of usual equitable colorability of trees~\cite{chen1994} and forests~\cite{chang2009} to $r$-equitable colorability. Quite recently, Yen \cite{yen2013} proposed a necessary and sufficient condition for a complete multipartite graph $G$ to have an $r$-equitable $k$-coloring and also gave exact values of $\chi_{r=}(G)$ and $\chi_{r=}^*(G)$. In particular, they determined the value of $\chi_{r=}^*(K_{m(n)})$, where $K_{m(n)}$ denotes the complete $m$-partite graph with $n$ vertices in each part.
\begin{lemma}\cite{yen2013}\label{balancer}
For integers $n,r\ge 1$ and $m\ge 2$, we have $\chi_{r=}^*(K_{m(n)})=m\big\lceil\frac{n}{\theta+r}\big\rceil$, where $\theta$ is the minimum positive integer such that $\big\lfloor\frac{n}{\theta+1}\rfloor<\lceil\frac{n}{\theta+r}\rceil$.
\end{lemma}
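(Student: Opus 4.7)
\emph{Proof plan.} My plan is to characterise which $k$ admit an $r$-equitable $k$-coloring of $K_{m(n)}$ and then read off the threshold. The structural starting point is that every independent set in $K_{m(n)}$ lies inside a single part, so each non-empty color class is contained in one of the $m$ parts of size $n$. An $r$-equitable $k$-coloring is thus specified by a common size window $[a,a+r]$ together with, for each part, a way to write $n$ as a sum of class sizes drawn from that window. For $a\ge 1$, if part $i$ uses $k_i$ classes then $k_i a\le n\le k_i(a+r)$, that is, $k_i\in[\lceil n/(a+r)\rceil,\lfloor n/a\rfloor]$, and conversely any such $k_i$ can be realised by distributing the excess $n-ak_i$ across the $k_i$ classes (at most $r$ per class). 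Summing over the $m$ parts, the achievable values of $k$ form the integer interval $I_a:=[m\lceil n/(a+r)\rceil,\,m\lfloor n/a\rfloor]$. The border case $a=0$ permits empty classes together with non-empty sizes in $[1,r]$ and yields $I_0:=[m\lceil n/r\rceil,\infty)$. Thus the set of valid $r$-equitable coloring numbers is $\bigcup_{a\ge 0}I_a$.

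Next I would carry out a gap analysis on this union. Both endpoints of $I_a$ are non-increasing in $a$, so the only issue is whether consecutive intervals leave an integer gap between them. A direct computation shows, for $a\ge 2$, that $I_{a-1}$ and $I_a$ leave an integer gap iff $\lfloor n/a\rfloor<\lceil n/(a+r-1)\rceil$; the assumption $m\ge 2$ is what amplifies a unit shortfall in the size parameters into a genuine integer gap in $k$. Reindexing $t=a-1$, the smallest $t\ge 1$ at which this shortfall first occurs is exactly $\theta$ by definition, and for $t<\theta$ consecutive intervals overlap. Moreover, $I_0$ always overlaps $I_1$ since $m\lceil n/r\rceil\le mn$, the right endpoint of $I_1$.

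These two facts close the argument. The intervals $I_0,I_1,\ldots,I_\theta$ have no gaps between consecutive members, so their union is the single tail $[m\lceil n/(\theta+r)\rceil,\infty)$, and every $k\ge m\lceil n/(\theta+r)\rceil$ is a valid $r$-equitable coloring number. Every remaining $I_a$ with $a\ge \theta+1$ sits strictly to the left of this tail, since its right endpoint is at most $m\lfloor n/(\theta+1)\rfloor<m\lceil n/(\theta+r)\rceil$. Hence $m\lceil n/(\theta+r)\rceil-1$ is not achievable, which pins the threshold down to $\chi_{r=}^*(K_{m(n)})=m\lceil n/(\theta+r)\rceil$. The main obstacle I anticipate is the careful floor/ceiling bookkeeping in the gap analysis, in particular making precise why the factor $m\ge 2$ is what converts a unit gap between $\lceil\cdot\rceil$ and $\lfloor\cdot\rfloor$ at the size level into an actual unachievable integer at the $k$-level.
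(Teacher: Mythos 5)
Your proposal is correct, but it is worth noting that the paper itself does not prove this statement at all: Lemma~\ref{balancer} is imported from Yen's paper \cite{yen2013}, and the only machinery from that source that the present paper uses is Lemma~\ref{empty} (the window partition of $I_n$) and the colorability criterion of Lemma~\ref{yendis}, from which the threshold formula is extracted in Yen's work by an analysis close in spirit to Lemmas~\ref{minusi} and~\ref{thetaprime} here. Your route is genuinely different and self-contained: you parametrize colorings by the minimum class size $a$, show the achievable $k$ for fixed $a$ form the integer interval $I_a=[m\lceil n/(a+r)\rceil, m\lfloor n/a\rfloor]$ (with $I_0$ a tail), and locate the first gap in the union; the arithmetic driving the gap, namely that $\lfloor n/(\theta+1)\rfloor<\lceil n/(\theta+r)\rceil$ together with $m\ge 2$ forces a shortfall of at least $2$ at the $k$-level, is exactly the engine inside Lemma~\ref{minusi}, so your argument effectively reproves the relevant part of Lemma~\ref{yendis} along the way rather than invoking it. What your approach buys is a transparent picture of the whole set of admissible $k$ (not just the threshold), at the cost of some bookkeeping you should tighten in a full write-up: (i) $\theta$ exists because $\lfloor n/(t+1)\rfloor=0<\lceil n/(t+r)\rceil$ for $t\ge n$; (ii) $I_\theta$ is nonempty --- for $\theta\ge 2$ use minimality, $\lfloor n/\theta\rfloor\ge\lceil n/(\theta+r-1)\rceil\ge\lceil n/(\theta+r)\rceil$, and for $\theta=1$ it is trivial --- otherwise the union of $I_0,\dots,I_\theta$ need not reach down to $m\lceil n/(\theta+r)\rceil$; and (iii) in the final step, excluding $k=m\lceil n/(\theta+r)\rceil-1$ needs the right endpoints of $I_a$, $a\ge\theta+1$, to be at most $m\lceil n/(\theta+r)\rceil-m\le m\lceil n/(\theta+r)\rceil-2$, i.e.\ the same $m\ge 2$ amplification, not merely the strict inequality $m\lfloor n/(\theta+1)\rfloor<m\lceil n/(\theta+r)\rceil$ you cite there.
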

The special case of Lemma \ref{balancer} for $r=1$ was obtained by Lin and Chang \cite{lin2010}.

For two graphs $G$ and $H$, the {\em Kronecker product} $G\times H$ of $G$ and $H$
is the graph with vertex set $\{(x,y)\colon\,x\in V(G),y\in V(H)\}$ and
edge set $\{(x,y)(x',y')\colon xx'\in E(G)$ and $yy'\in E(H)\}.$ In this paper, we pay attention to the $r$-equitable colorability of Kronecker product of two complete graphs.  We refer to \cite{chen2009,furmanczyk2006,lin2010,yan2013} for more studies on usual equitable colorability of Kronecker products of graphs.

In \cite{duffus1985}, Duffus et al.\,showed that if $m\le n$ then $\chi(K_m\times K_n)=m$. From this result, Chen \cite{chen2009} got that $\chi_=(K_m\times K_n)=m$ for $m\le n$. Indeed, let $V(K_m\times K_n)=\{(x_i,y_j)\colon\,i\in[m],j\in[n]\}$. Then we can partition $V(K_m\times K_n)$ into $m$  sets $\{(x_i,y_j)\colon\,j\in[n]\}$ with $i=1,2,\ldots,m$, all of which have equal size and are  clearly independent. Similarly, for any $r\ge 1$, $\chi_{r=}(K_m\times K_n)=m$ for  $m\le n$. However, it is much  more difficult to determine the exact value of $\chi_{r=}^*(K_m\times K_n)$, even for $r=1$.
\begin{lemma}\cite{lin2010}\label{upperbound1}
For positive integers $m\le n$, we have $\chi_=^*(K_m\times K_n)\le \big\lceil\frac{mn}{m+1}\big\rceil$.
\end{lemma}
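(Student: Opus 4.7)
The lemma requires, for every integer $k$ with $k\ge\lceil mn/(m+1)\rceil$, the construction of an equitable $k$-coloring of $K_m\times K_n$. Setting $q=\lfloor mn/k\rfloor$ and $r=mn-qk$, the desired coloring consists of $r$ classes of size $q+1$ and $k-r$ classes of size $q$. The essential structural fact is that $(x_i,y_j)$ and $(x_{i'},y_{j'})$ are adjacent precisely when $i\neq i'$ and $j\neq j'$; hence every independent set of size at least $2$ lies in a single row of the natural $m\times n$ grid or a single column. Since $k\ge mn/(m+1)$ yields $q\le m+1$, each color class has size at most $m+1$ and fits in a row, and when its size is at most $m$ it also fits in a column.

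The plan is to handle the boundary $q=m+1$ separately, and then give a mixed row/column construction for $q\le m$. The boundary case forces $k=mn/(m+1)$ and $(m+1)\mid n$, so partitioning each of the $m$ rows into $n/(m+1)$ consecutive blocks of length $m+1$ produces the required coloring. For the main case $q\le m$, I would pick a parameter $t\in\{0,1,\dots,n\}$ and use the first $t$ columns as \emph{column pieces}: column $j\le t$ is split into $c_j$ pieces of sizes $q$ and $q+1$ summing to $m$, so $c_j\in[\lceil m/(q+1)\rceil,\lfloor m/q\rfloor]$ with exactly $m-c_jq$ pieces of size $q+1$. The remaining $m\times (n-t)$ subgrid is then partitioned row by row: row $i$ is split into $a_i$ row pieces of sizes $q$ and $q+1$ summing to $n-t$, so $a_i\in[\lceil (n-t)/(q+1)\rceil,\lfloor (n-t)/q\rfloor]$. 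The identity $mn=qk+r$ makes the size-$(q+1)$ totals automatic once the overall count is correct, reducing the task to choosing $t$, $\{c_j\}$, $\{a_i\}$ in their feasible ranges with $\sum_j c_j+\sum_i a_i=k$.

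The main obstacle is the combinatorial bookkeeping for the main case: one must show that for every $k\ge\lceil mn/(m+1)\rceil$ with $q\le m$, a suitable triple $(t,\{c_j\},\{a_i\})$ exists. I would carry this out by a case analysis on $q$ relative to $m$ and on the residues of $m$ and $n-t$ modulo $q$ and $q+1$. When the row decomposition alone suffices, i.e.\ $k\in[m\lceil n/(q+1)\rceil,\,m\lfloor n/q\rfloor]$, I would take $t=0$ and distribute the $k$ pieces as evenly as possible over the $m$ rows; when it does not, I would absorb the deficit or surplus by turning on a small number of column-split columns, choosing $t$ minimal so that $k-\sum_j c_j$ lies in $[m\lceil (n-t)/(q+1)\rceil,\,m\lfloor (n-t)/q\rfloor]$. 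The monotonicity of these ranges in $t$ and the constraint $k\ge mn/(m+1)$ should together guarantee that such a $t\in\{0,\dots,n\}$ always exists. Once feasible parameters are in hand, the verification that the resulting pieces are pairwise disjoint, cover $V(K_m\times K_n)$, are independent, and have the prescribed sizes is routine.
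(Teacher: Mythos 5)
Your structural reduction (independent sets of size at least $2$ lie in a row or a column) and the boundary case $q=m+1$ are fine, but the main case contains a genuine flaw, not merely unfinished bookkeeping. In your scheme every used column must be partitioned \emph{entirely} into pieces of sizes $q$ and $q+1$, which is impossible whenever $m$ is not a sum of $q$'s and $(q+1)$'s, and then no choice of $t$ helps. Concretely take $m=5$, $n=7$, $k=11\ge\lceil 35/6\rceil=6$: here $q=\lfloor 35/11\rfloor=3$, a column of size $5$ cannot be split into parts of sizes $3$ and $4$, so $t=0$ is forced, and each row of length $7$ then splits into exactly two parts ($\lceil 7/4\rceil=\lfloor 7/3\rfloor=2$), giving $k=10$ only. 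An equitable $11$-coloring of $K_5\times K_7$ does exist -- e.g.\ split rows $1$ and $2$ as $4+3$, take the three cells of column $7$ lying in rows $3,4,5$ as one class, and split the remaining six cells of each of rows $3,4,5$ as $3+3$ -- but it uses a \emph{partially} occupied column, which your decomposition forbids. Relatedly, your appeal to ``monotonicity of these ranges in $t$'' is false: both $t\lceil m/(q+1)\rceil+m\lceil (n-t)/(q+1)\rceil$ and $t\lfloor m/q\rfloor+m\lfloor (n-t)/q\rfloor$ are sawtooth in $t$, and, as the example shows, the union of the intervals over $t$ can miss admissible values of $k$. Note that the failure occurs exactly in the regime $k>n$ (equivalently $q<m$); for $\lceil mn/(m+1)\rceil\le k\le n$ your construction degenerates to whole columns of size $m$ plus row pieces of sizes $m,m+1$ and does work.

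For comparison: the paper does not reprove this cited lemma, but its proof of the generalization (Lemma \ref{upperboundr}) sidesteps precisely this trap. It first reduces to $k\le n$ (there it may invoke the present lemma for larger $k$, which you of course cannot do without circularity), then uses whole columns of size exactly $m$ as classes and partitions each leftover row into parts of sizes between $m$ and $m+r$ via Lemma \ref{empty}; columns are never subdivided. To repair your argument you must either handle $k>n$ separately or allow a column to contribute one class while its remaining cells are absorbed, one per class, into row classes.
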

In the same paper, Lin and Chang also determined $\chi_=^*(K_2\times K_n)$ and $\chi_=^*(K_3\times K_n)$. Note that the case when $m=1$ is trivial since $K_1\times K_n$ is the empty graph $I_n$ and hence $\chi_=^*(K_1\times K_n)=1$. Recently, those results have been improved to the following.

\begin{theorem}\cite{yan2014}\label{KmKnr1}
For integers $n\ge m\ge 2$,
\begin{equation*}
\chi_=^*(K_m\times K_n)=
\begin{cases} \big\lceil\frac{mn}{m+1}\big\rceil,&$if~$ n\equiv 2,\ldots,m-1~(\textup{mod}~m+1);\\
m\big\lceil\frac{n}{s^*}\big\rceil,&$if~$ n\equiv 0,1,m~(\textup{mod}~m+1),
\end{cases}
\end{equation*}
where $s^*$ is the minimum positive integer such that $s^*\nmid n$ and $m\big\lceil\frac{n}{s^*}\big\rceil\le  \big\lceil\frac{mn}{m+1}\big\rceil$.
\end{theorem}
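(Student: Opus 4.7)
Denote the claimed value of $\chi_=^*(K_m \times K_n)$ by $\Phi$. The plan is to split the argument into an upper bound (explicit construction) and a matching lower bound (impossibility), after setting up a structural reduction that drives the whole case analysis.

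First I would identify $V(K_m \times K_n)$ with the $m \times n$ grid $[m] \times [n]$ and exploit the key structural observation that any independent set of size at least $3$ is contained in a single row $\{i\} \times [n]$ or a single column $[m] \times \{j\}$, because two vertices are non-adjacent iff they share a coordinate. Consequently, every color class of size exceeding $m$ must be a sub-row, whereas classes of size at most $m$ may be sub-rows, sub-columns, or (for size $2$) any pair sharing a coordinate. This dichotomy between the row regime and the column regime is the main engine of both halves of the proof.

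For the upper bound, for each $k \ge \Phi$ I would write $mn = qk + s$ with $0 \le s < k$, so an equitable $k$-coloring must consist of $s$ classes of size $q+1$ and $k-s$ of size $q$. I then case-split on $q$: when $q+1 \le m$ (large $k$) all classes fit inside columns and a greedy column-wise partition suffices; when $q \ge m+1$ (small $k$) every class is a sub-row, and the problem reduces to decomposing each row of length $n$ into sub-rows of sizes $q$ and $q+1$ with the prescribed aggregate counts; the transitional regime $q = m$ is the most delicate and is precisely what dictates the case split on $n \pmod{m+1}$. For $n \equiv 2, \ldots, m-1 \pmod{m+1}$ at $k = \lceil mn/(m+1)\rceil$, the construction takes the first $q(m+1)$ cells of each row as $q$ classes of size $m+1$ and decomposes the remaining $m \times a$ block (where $a = n - q(m+1)$) into $a$ full columns, yielding the required $qm + a$ classes.

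For the lower bound the task is to exhibit some $k' \ge \Phi - 1$ admitting no equitable $k'$-coloring. In the first residue case, the natural candidate is $k' = \lceil mn/(m+1)\rceil - 1$: the average class size strictly exceeds $m+1$, so every class must be a sub-row, and a counting argument against the total row-length $mn$ together with the equitability requirement delivers a contradiction. In the second residue case, the role of $s^*$ becomes decisive: any putative equitable coloring with $\Phi - 1 \le k' < m\lceil n/s^*\rceil$ would force a uniform class size $s < s^*$ with $s \mid n$, contradicting the minimality of $s^*$.

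The main obstacle will be the lower bound in the second residue case. Converting non-existence of an equitable $k'$-coloring into the arithmetic condition defining $s^*$ demands a careful analysis of which class-size profiles $(q, q+1)$ are realizable by packings of sub-rows into the $m$ rows of length $n$, possibly mixed with sub-columns when $q \le m$. The interplay between rows and columns at the boundary $q = m$, together with the non-divisibility constraint $s^* \nmid n$, is where the genuine combinatorial work lies; the remaining casework on $n \pmod{m+1}$ should then reduce to routine manipulations of $\lceil \cdot \rceil$ and $\lfloor \cdot \rfloor$.
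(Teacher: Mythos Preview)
Your structural observation---that every independent set of $K_m\times K_n$ lies in a single row or a single column---is exactly the engine the paper uses (see the proof of Lemma~\ref{mulkro}). But the paper organizes the argument quite differently from your ad hoc counting. Note first that the paper does not prove this theorem from scratch: it is quoted from \cite{yan2014}, and the paper's own contribution is the general-$r$ Theorem~\ref{main}, which specializes to the present statement when $r=1$. The proof of Theorem~\ref{main} runs through (i) an explicit constructive upper bound $\chi_{r=}^*(K_m\times K_n)\le\Gamma$ (Lemma~\ref{upperboundr}), (ii) the reduction Lemma~\ref{mulkro}, which shows that for every $k<\Gamma$ an $r$-equitable $k$-coloring of $K_m\times K_n$ is already one of $K_{m(n)}$, and (iii) Yen's closed-form criterion for $r$-equitable colorability of $K_{m(n)}$ (Lemma~\ref{yendis}). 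Both bounds in the second residue case are then pure arithmetic with Yen's criterion, packaged as Lemmas~\ref{minusi} and~\ref{thetaprime}. What this buys over your direct approach is that the row-versus-column bookkeeping is done once (in Lemma~\ref{mulkro}) rather than re-argued in every subcase.

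Your plan has a genuine gap in the second residue case. For the lower bound you assert that an equitable $(\Phi-1)$-coloring ``would force a uniform class size $s<s^*$ with $s\mid n$, contradicting the minimality of $s^*$.'' This is backwards: by definition of $s^*$, every positive $s<s^*$ with $m\lceil n/s\rceil\le\lceil mn/(m+1)\rceil$ \emph{does} satisfy $s\mid n$, so producing such an $s$ is no contradiction at all. The actual obstruction at $k'=m\lceil n/s^*\rceil-1$ is that $\lfloor k'/m\rfloor=\lceil n/s^*\rceil-1$ and $\lceil k'/m\rceil=\lceil n/s^*\rceil$, and the non-divisibility $s^*\nmid n$ forces $\big\lceil n/(\lceil n/s^*\rceil-1)\big\rceil-\big\lfloor n/\lceil n/s^*\rceil\big\rfloor\ge 2$; this is Yen's criterion failing, i.e.\ Lemma~\ref{minusi} with $r=1$. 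Your upper-bound sketch likewise does not handle the range $m\lceil n/s^*\rceil\le k<\lceil mn/(m+1)\rceil$: here $q\ge m+1$, so all classes must be sub-rows, and ``decomposing each row into sub-rows of sizes $q$ and $q+1$ with the prescribed aggregate counts'' is exactly the question of whether $K_{m(n)}$ is equitably $k$-colorable. Establishing this for every such $k$ is not a generic construction; it requires the minimality of $s^*$ in the form of Lemma~\ref{thetaprime}.
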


From the definition of $s^*$, we see that $s^*\neq 1$ and hence $s^*\ge 2$. Let $\theta=s^*-1$. Then we can restate Theorem \ref{KmKnr1} as follows.
\begin{theorem}\label{resKmKnr1}
For integers $n\ge m\ge 2$,
\begin{equation*}
\chi_=^*(K_m\times K_n)=
\begin{cases} \big\lceil\frac{mn}{m+1}\big\rceil,&$if~$ n\equiv 2,\ldots,m-1~(\textup{mod}~m+1);\\
m\big\lceil\frac{n}{\theta+1}\big\rceil,&$if~$ n\equiv 0,1,m~(\textup{mod}~m+1),
\end{cases}
\end{equation*}
where $\theta$ is the minimum positive integer such that $\theta+1\nmid n$ and $m\big\lceil\frac{n}{\theta+1}\big\rceil\le  \big\lceil\frac{mn}{m+1}\big\rceil$.
\end{theorem}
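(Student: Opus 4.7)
The plan is to deduce Theorem \ref{resKmKnr1} directly from Theorem \ref{KmKnr1} by the elementary substitution $\theta = s^{*}-1$, so the entire proof reduces to verifying that this substitution sets up a bijection between the admissible values of $s^{*}$ and those of $\theta$.

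First I would record the observation already made in the excerpt: since $1 \mid n$ for every positive integer $n$, the constraint $s^{*}\nmid n$ forces $s^{*}\ge 2$. Consequently $\theta := s^{*}-1$ is a positive integer, and conversely every positive integer $\theta\ge 1$ yields an integer $s^{*}=\theta+1\ge 2$. Under this order-preserving correspondence, the two defining conditions transform term-by-term: $s^{*}\nmid n$ becomes $\theta+1\nmid n$, and $m\bigl\lceil n/s^{*}\bigr\rceil \le \bigl\lceil mn/(m+1)\bigr\rceil$ becomes $m\bigl\lceil n/(\theta+1)\bigr\rceil \le \bigl\lceil mn/(m+1)\bigr\rceil$. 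Because the correspondence is monotone, the minimum $s^{*}\ge 2$ satisfying its conditions corresponds to the minimum positive integer $\theta$ satisfying its conditions.

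Finally I would note that the right-hand side of the second case in Theorem \ref{KmKnr1} is invariant under the substitution, since $m\bigl\lceil n/s^{*}\bigr\rceil = m\bigl\lceil n/(\theta+1)\bigr\rceil$, and the first case is unchanged. This yields Theorem \ref{resKmKnr1} verbatim.

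There is no substantive obstacle here; the only thing to watch is the edge case $s^{*}=1$, which is ruled out automatically by the divisibility constraint. This restatement is purely cosmetic, but it aligns the formula with the shape $m\lceil n/(\theta+r)\rceil$ that appears in Lemma \ref{balancer} and, presumably, in the general formula for $\chi_{r=}^{*}(K_{m}\times K_{n})$ to be developed for $r\ge 2$ in the sequel.
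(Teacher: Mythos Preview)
Your proposal is correct and matches the paper's own argument essentially word for word: the paper simply observes that $s^{*}\ge 2$ (since $1\mid n$), sets $\theta=s^{*}-1$, and declares the restatement, which is precisely the substitution you carry out and justify.
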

\begin{corollary}\label{usu}
If $n\ge m$ and  $n\equiv 2,\ldots,m-1~(\textup{mod}~m+1)$ then $\chi_=^*(K_m\times K_n)<\chi_=^*(K_{m(n)})$.
\end{corollary}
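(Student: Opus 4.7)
The plan is to compare the two formulas produced by Theorem~\ref{resKmKnr1} and by Lemma~\ref{balancer} at $r=1$, and to show the second strictly exceeds the first under the stated congruence hypothesis. Theorem~\ref{resKmKnr1} gives $\chi_=^*(K_m\times K_n)=\lceil mn/(m+1)\rceil$, while Lemma~\ref{balancer} with $r=1$ gives $\chi_=^*(K_{m(n)})=m\lceil n/(\theta+1)\rceil$, where at $r=1$ the defining condition $\lfloor n/(\theta+1)\rfloor<\lceil n/(\theta+1)\rceil$ collapses to $\theta+1\nmid n$. Writing $n=q(m+1)+s$ with $2\le s\le m-1$ and using $\lceil ms/(m+1)\rceil=s$ (immediate since $0<s/(m+1)<1$), the left-hand formula simplifies to $qm+s$, so the task reduces to the single clean inequality
\[
  qm+s \;<\; m\left\lceil\frac{n}{\theta+1}\right\rceil.
\]

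The heart of the argument is a bound on $\theta$. Because $2\le s\le m-1$, the integer $n$ is not divisible by $m+1$, so $\theta=m$ already satisfies $\theta+1\nmid n$; by minimality of $\theta$, we get $\theta\le m$. Hence $q(\theta+1)\le q(m+1)<q(m+1)+s=n$, which forces $\lceil n/(\theta+1)\rceil\ge q+1$. Multiplying by $m$ and invoking $s\le m-1<m$ then yields
\[
  m\left\lceil\frac{n}{\theta+1}\right\rceil \;\ge\; m(q+1) \;=\; qm+m \;>\; qm+s,
\]
completing the comparison.

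I do not foresee a substantive obstacle: no case analysis on $s$ is required, and no structural information about $K_m\times K_n$ beyond the evaluation already recorded in Theorem~\ref{resKmKnr1} is needed. The only point that deserves care is to correctly unpack Lemma~\ref{balancer} at $r=1$ and to observe that the excluded residues $\{0,1,m\}\pmod{m+1}$ are exactly those where either $m+1\mid n$ or the Theorem~\ref{resKmKnr1} formula falls into its second branch; on the complementary range $\{2,\dots,m-1\}$ the single fact $m+1\nmid n$ is the entire leverage that pins $\theta\le m$ and produces the strict inequality.
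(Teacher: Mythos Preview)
Your argument is correct, but it proceeds along a different line from the paper's proof. The paper also writes $n=(m+1)s+t$ with $2\le t\le m-1$ and computes $\chi_=^*(K_m\times K_n)=\big\lceil\frac{mn}{m+1}\big\rceil=ms+t$, but then finishes with a divisibility trick: since $1\le t\le m-1$, this value is \emph{not} a multiple of $m$, whereas Lemma~\ref{balancer} makes $\chi_=^*(K_{m(n)})=m\big\lceil\frac{n}{\theta+1}\big\rceil$ an exact multiple of $m$; inequality then follows from the spanning-subgraph bound $\chi_=^*(K_m\times K_n)\le\chi_=^*(K_{m(n)})$. By contrast, you dispense with the subgraph comparison entirely and instead pin down $\theta\le m$ from $m+1\nmid n$, which yields the explicit lower bound $m\big\lceil\frac{n}{\theta+1}\big\rceil\ge m(q+1)>qm+s$. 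Your route is a shade more computational but fully self-contained; the paper's is slicker but leans on the additional structural fact that $K_m\times K_n$ sits inside $K_{m(n)}$.
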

\begin{proof}
Since $K_m\times K_n$ is a spanning subgraph of $K_{m(n)}$, $\chi_=^*(K_m\times K_n)\le \chi_=^*(K_{m(n)})$. Therefore, the lemma follows if we can show $\chi_=^*(K_m\times K_n)\neq \chi_=^*(K_{m(n)})$.
Let $n=(m+1)s+t$ with $s=\big\lfloor\frac{n}{m+1}\big\rfloor$ and $2\le t\le m-1$.  We have $\big\lceil\frac{mn}{m+1}\big\rceil=\big\lceil\frac{m(m+1)s+mt}{m+1}\big\rceil=\big\lceil\frac{m(m+1)s+(m+1)t-t}{m+1}\big\rceil
=ms+t+\big\lceil\frac{-t}{m+1}\big\rceil=ms+t$. By Theorem \ref{resKmKnr1}, $\chi_=^*(K_m\times K_n)=\big\lceil\frac{mn}{m+1}\big\rceil=ms+t$ and hence $m$ is not a factor of $\chi_=^*(K_m\times K_n)$. On the other hand, by Lemma \ref{balancer}, $m$ is a factor of $\chi_=^*(K_{m(n)})$. Therefore, $\chi_=^*(K_m\times K_n)\neq \chi_=^*(K_{m(n)})$ and hence  the proof is complete.
\end{proof}
The main purpose of this paper is to obtain the exact value of $\chi_{r=}^*(K_m\times K_n)$ for any $r\ge 1$, which we state as the following theorem.
\begin{theorem}\label{main}
For any integers $n\ge m\ge 2$ and $r\ge 1$,
\begin{equation*}
\chi_{r=}^*(K_m\times K_n)=
\begin{cases} n-r\big\lfloor\frac{n}{m+r}\big\rfloor,&$if~$ n\equiv 2,\ldots,m-1~(\textup{mod}~m+r)~$and~$\\ &\Big\lceil\frac{n}{\lfloor n/(m+r)\rfloor}\Big\rceil-\Big\lfloor\frac{n}{\lceil n/(m+r)\rceil}\Big\rfloor>r;\\
m\big\lceil\frac{n}{\theta+r}\big\rceil,&$otherwise,$
\end{cases}
\end{equation*}
where $\theta$ is the minimum positive integer such that $\big\lfloor\frac{n}{\theta+1}\big\rfloor<\big\lceil\frac{n}{\theta+r}\big\rceil$ and $m\big\lceil\frac{n}{\theta+r}\big\rceil\le \textup{min}\{n-r\big\lfloor\frac{n}{m+r}\big\rfloor,m\big\lceil\frac{n}{m+r}\big\rceil\}$.
\end{theorem}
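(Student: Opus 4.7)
My approach is to prove matching upper and lower bounds for $\chi_{r=}^*(K_m\times K_n)$, exploiting the structural fact that every independent set of $K_m\times K_n$ lies entirely inside a single row $R_j$ or inside a single column $C_i$: two vertices in distinct rows and distinct columns are always adjacent. I therefore classify each color class as a \emph{row-class} (size at most $m$) or a \emph{column-class} (size at most $n$). Throughout I write $n=(m+r)s+t$ with $s=\lfloor n/(m+r)\rfloor$ and $0\le t\le m+r-1$, and let $k_0$ denote the claimed threshold.

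The upper bound proceeds by explicit constructions. In Case~1 ($2\le t\le m-1$ together with the gap condition), the canonical $k=k_0=ms+t$ coloring is: designate $t$ full rows as $t$ row-classes of size $m$, then partition each column's remaining $(m+r)s$ vertices into $s$ column-classes of size $m+r$. This produces $ms+t$ classes with sizes exactly $m$ and $m+r$, differing by $r$. For larger $k$, I would re-partition some rows into smaller row-classes and correspondingly split column-classes, keeping all sizes inside a common interval of length $r$. In Case~2 I use essentially column-classes only: for $k=mq$ with $q\ge\lceil n/(\theta+r)\rceil$, partition each column into $q$ parts of sizes in $\{\lfloor n/q\rfloor,\lceil n/q\rceil\}$, as in the proof of Lemma~\ref{balancer}; for $k$ not a multiple of $m$, a small mix of row- and column-classes adjusts the count, and the constraints built into $\theta$ are precisely what certify that this remains $r$-equitable.

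For the lower bound, I suppose an $r$-equitable $k$-coloring exists with sizes in $[a,a+r]$, and let $R$ and $C$ be the numbers of row- and column-classes. When $a>m$ every class is a column-class, forcing $k\ge m\lceil n/(a+r)\rceil$ by counting inside each column, which is the bound underlying Lemma~\ref{balancer}. When $a\le m$, row-classes contribute at most $Rm$ vertices while column-classes cover the remaining $\ge mn-Rm$; combined with $a(R+C)\le mn\le (a+r)(R+C)$ this yields inequalities that, under the Case~1 hypotheses, force $k\ge ms+t$. The congruence $n\equiv t\pmod{m+r}$ with $t\in\{2,\dots,m-1\}$ and the gap inequality together guarantee that no pure-column choice of $(a,q)$ beats $ms+t$ in Case~1, while the failure of either condition drops us into Case~2, where the pure-column bound $m\lceil n/(\theta+r)\rceil$ is already optimal.

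The main obstacle I expect is the Case~1 lower bound: one must simultaneously exclude mixed colorings with fewer than $ms+t$ classes, since row-classes of size at most $m$ cannot absorb the enlarged column-pieces without breaking $r$-equitability, and also exclude pure-column colorings that, without the gap hypothesis, could give a smaller $k$. A related subtlety is verifying that the theorem's $\theta$ --- the minimum positive integer satisfying all three listed constraints --- really does minimize $m\lceil n/(\theta+r)\rceil$ among feasible column-only colorings; this should follow by a monotonicity argument and a careful case split on $t$ and $s$, building on Lemma~\ref{balancer}.
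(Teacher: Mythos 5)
Your row/column dichotomy and the overall plan (explicit constructions for the upper bound, a counting argument for the lower bound) are the same ingredients the paper uses, but two of your steps fail as written. The more serious one is the Case~2 upper bound for values $k$ with $m\big\lceil\frac{n}{\theta+r}\big\rceil\le k<\Gamma:=\min\{n-r\big\lfloor\frac{n}{m+r}\big\rfloor,m\big\lceil\frac{n}{m+r}\big\rceil\}$ that are not multiples of $m$. You propose to reach such $k$ by ``a small mix of row- and column-classes'', but no such mixed coloring exists in that range: once a single class lies inside a row its size is at most $m$, so every class has size at most $m+r$, and counting the classes inside each column separately gives $k\ge m\big\lceil\frac{n-\ell}{m+r}\big\rceil+\ell\ge\Gamma$ (with $\ell$ the number of row-classes) --- this is precisely Lemma~\ref{mulkro}, which you yourself need for the lower bound. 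Hence for these $k$ the coloring must be pure-column, with some columns cut into $\lfloor k/m\rfloor$ parts and others into $\lceil k/m\rceil$ parts, and its existence is governed by Yen's inequality (Lemma~\ref{yendis}). Verifying that inequality for \emph{every} $k\ge m\big\lceil\frac{n}{\theta+r}\big\rceil$ is the real content of Case~2; the paper does it by contradiction via Lemma~\ref{thetaprime} and the minimality of $\theta$, with a three-way split on $n\bmod (m+r)$. Your appeal to ``the constraints built into $\theta$'' never invokes the second defining condition $m\big\lceil\frac{n}{\theta+r}\big\rceil\le\Gamma$ at all, so this step is missing, not merely compressed.

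The second gap is your Case~1 lower bound for mixed colorings. The aggregate count ``column-classes cover at least $mn-Rm$ vertices, each of size at most $m+r$'' yields only $k\ge \frac{r}{m+r}R+\frac{mn}{m+r}$, i.e.\ roughly $ms+\frac{mt+r}{m+r}$, which can fall short of the needed $ms+t$: for $m=10$, $n=21$, $r=2$ (so $s=1$, $t=9$, threshold $ms+t=19$) it gives only $k\ge 18$ and does not exclude $k=ms+t-1=18$. The repair is to count per column with ceilings, as in Lemma~\ref{mulkro}: each column still contains at least $\big\lceil\frac{n-R}{m+r}\big\rceil$ column-classes, so $k\ge m\big\lceil\frac{n-R}{m+r}\big\rceil+R$, whose minimum over $R$ is exactly $ms+t$ when $0\le t\le m-1$. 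With that fix, an explicit Yen-type argument for the pure-column case at $k=ms+t-1$ (one column has at most $s$ classes and another at least $s+1$, so the size spread is at least $\big\lceil\frac{n}{s}\big\rceil-\big\lfloor\frac{n}{s+1}\big\rfloor>r$), and an argument for non-colorability at $m\big\lceil\frac{n}{\theta+r}\big\rceil-1$ in Case~2 (Lemma~\ref{minusi}), your outline becomes essentially the paper's proof.
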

Theorem \ref{main} agrees with Theorem \ref{resKmKnr1} when $r=1$. Firstly, $n-\big\lfloor\frac{n}{m+1}\big\rfloor=n+\big\lceil\frac{-n}{m+1}\big\rceil=\big\lceil\frac{(m+1)n-n}{m+1}\big\rceil= \big\lceil\frac{mn}{m+1}\big\rceil$. Secondly, we claim that $n\equiv 2,\ldots,m-1~(\textup{mod}~m+1)$ implies $\Big\lceil\frac{n}{\lfloor n/(m+1)\rfloor}\Big\rceil-\Big\lfloor\frac{n}{\lceil n/(m+1)\rceil}\Big\rfloor>1$. Let $n=(m+1)s+t$ with $s=\big\lfloor\frac{n}{m+1}\big\rfloor$ and  $2\le t\le m-1$. Then $(m+1)s<n<(m+1)(s+1)$ and hence
$$\bigg\lceil\frac{n}{\lfloor n/(m+1)\rfloor}\bigg\rceil-\bigg\lfloor\frac{n}{\lceil n/(m+1)\rceil}\bigg\rfloor=\Big\lceil\frac{n}{s}\Big\rceil-\Big\lfloor\frac{n}{s+1}\Big\rfloor\ge (m+2)-m\ge 2.$$
Finally, we need to check that two definitions of $\theta$ in Theorems \ref{resKmKnr1} and \ref{main} are equivalent. Clearly, $\big\lfloor\frac{n}{\theta+1}\big\rfloor<\big\lceil\frac{n}{\theta+1}\big\rceil$ if and only if $\theta+1\nmid n$. Since $m\big\lceil\frac{n}{m+1}\big\rceil$ is an integer and $m\big\lceil\frac{n}{m+1}\big\rceil\ge \frac{mn}{m+1}$, we have $m\big\lceil\frac{n}{m+1}\big\rceil\ge \big\lceil\frac{mn}{m+1}\big\rceil$. As we have already shown $n-\big\lfloor\frac{n}{m+1}\big\rfloor= \big\lceil\frac{mn}{m+1}\big\rceil$, we see that $\textup{min}\{n-\big\lfloor\frac{n}{m+1}\big\rfloor,m\big\lceil\frac{n}{m+1}\big\rceil\}=\big\lceil\frac{mn}{m+1}\big\rceil$. This shows the two definitions of $\theta$ are equivalent.


For fixed integers $m$ and $r\ge 2$,  Theorem \ref{main} can be simplified when $n$ is sufficiently large. Compared to Corollary \ref{usu},  the following theorem indicates that the behaviors of $\chi_{r=}^*(K_{m(n)})$ and $\chi_{r=}^*(K_m\times K_n)$ with $r\geq 2$ are quite different from the case when $r=1$.

\begin{theorem}\label{equ}
For any integers $n\ge m\ge 2$ and $r\ge 2$, if $n\ge\frac{1}{r-1}(m+r)(m+2r-1)$ then $\chi_{r=}^*(K_m\times K_n)=\chi_{r=}^*(K_{m(n)})$, and moreover, $K_m\times K_n$ and $K_{m(n)}$ have the same $r$-equitable colorability, that is,
 $K_m\times K_n$ is $r$-equitably $k$-colorable if and only if $K_{m(n)}$ is $r$-equitably $k$-colorable.
\end{theorem}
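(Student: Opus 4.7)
The plan is to prove the stronger biconditional about $r$-equitable $k$-colorability directly; the equality $\chi_{r=}^*(K_m\times K_n)=\chi_{r=}^*(K_{m(n)})$ will then follow at once, since the threshold depends only on the set of admissible $k$. The ``if'' direction is immediate because $K_m\times K_n$ is a spanning subgraph of $K_{m(n)}$. For the ``only if'' direction I start with an $r$-equitable $k$-coloring $f$ of $K_m\times K_n$ whose class sizes lie in $[a,a+r]$, and split on the value of $a$, using the standard observation that every independent set of $K_m\times K_n$ of size at least two lies either in a single row $\{x_i\}\times V(K_n)$ or in a single column $V(K_m)\times\{y_j\}$.

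If $a\ge m+1$, then each color class has more than $m$ vertices and so cannot be contained in a column (which has exactly $m$ vertices); hence every class lies in a single row. Since rows of $K_m\times K_n$ are precisely the parts of $K_{m(n)}$, the same $f$ is an $r$-equitable $k$-coloring of $K_{m(n)}$, and this case is complete.

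If $a\le m$, then $mn=\sum|C_i|\le k(m+r)$ forces $k\ge\lceil mn/(m+r)\rceil$, and the goal is to show $\chi_{r=}^*(K_{m(n)})\le\lceil mn/(m+r)\rceil$, for then $k\ge\chi_{r=}^*(K_{m(n)})$ makes $K_{m(n)}$ $r$-equitably $k$-colorable by definition of the threshold. This reduces to two arithmetic facts. First, I plan to show that the $\theta$ of Lemma~\ref{balancer} satisfies $\theta\ge m+r-1$: for each positive $\theta'\le m+r-2$, the interval $[n/(\theta'+r),\,n/(\theta'+1)]$ has length $n(r-1)/((\theta'+1)(\theta'+r))\ge n(r-1)/((m+r-1)(m+2r-2))$, which is at least $1$ by the hypothesis $n(r-1)\ge (m+r)(m+2r-1)$, so it contains an integer and $\lfloor n/(\theta'+1)\rfloor\ge\lceil n/(\theta'+r)\rceil$ rules $\theta'$ out. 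Second, a short computation with the same hypothesis gives $(r-1)\lfloor n/(m+r)\rfloor\ge m+r-1$, which rearranges to $\lceil n/(m+2r-1)\rceil\le\lfloor n/(m+r)\rfloor$. Combining these with $\theta+r\ge m+2r-1$ produces the chain
\[
\chi_{r=}^*(K_{m(n)})=m\lceil n/(\theta+r)\rceil\le m\lceil n/(m+2r-1)\rceil\le m\lfloor n/(m+r)\rfloor\le \lceil mn/(m+r)\rceil\le k.
\]

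The main obstacle is purely arithmetic: one must verify that the specific lower bound $n\ge (m+r)(m+2r-1)/(r-1)$ is tight enough to secure \emph{both} displayed inequalities simultaneously. All remaining steps are either trivial (the ``if'' direction) or purely structural (the case $a\ge m+1$), so the entire weight of the theorem rests on a careful analysis of $\theta$ and of the two quantities $\lfloor n/(m+r)\rfloor$ and $\lceil n/(m+2r-1)\rceil$ under the given bound on $n$.
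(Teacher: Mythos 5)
Your proof is correct, and it takes a genuinely different route from the paper. The paper obtains Theorem \ref{equ} as a corollary of its main formula (Theorem \ref{main}): it checks that under the hypothesis $n\ge\frac{1}{r-1}(m+r)(m+2r-1)$ the exceptional case of Theorem \ref{main} cannot occur and the side condition on $\theta$ is automatic, so the two threshold formulas coincide, and then it gets the colorability equivalence from Lemma \ref{upperboundr} combined with Lemma \ref{mulkro}. You instead prove the colorability equivalence directly and read off the threshold equality from the equality of the sets of admissible $k$: the spanning-subgraph direction is trivial, and for the converse you split on the minimum class size $a$. When $a\ge m+1$ no class fits in a column, so every class sits inside a row (a part of $K_{m(n)}$) and the coloring transfers verbatim; when $a\le m$ the bound $mn\le k(m+r)$ gives $k\ge\lceil mn/(m+r)\rceil$, and your two arithmetic facts --- $\theta\ge m+r-1$ (via the length of $[n/(\theta'+r),n/(\theta'+1)]$ being at least $1$ for $\theta'\le m+r-2$) and $\lceil n/(m+2r-1)\rceil\le\lfloor n/(m+r)\rfloor$ (via $(r-1)\lfloor n/(m+r)\rfloor\ge m+r-1$) --- yield $\chi_{r=}^*(K_{m(n)})=m\lceil n/(\theta+r)\rceil\le m\lfloor n/(m+r)\rfloor\le\lceil mn/(m+r)\rceil\le k$, so $K_{m(n)}$ is $r$-equitably $k$-colorable by the definition of the threshold; both arithmetic claims do follow from the stated hypothesis, as you indicate. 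What your approach buys is independence from Theorem \ref{main} and from Lemmas \ref{upperboundr}--\ref{mulkro}: given Yen's formula (Lemma \ref{balancer}), it is a short, self-contained proof of Theorem \ref{equ}, and your case $a\le m$ quietly replaces the role of Lemma \ref{mulkro} in this large-$n$ regime (rather than showing a small-$k$ coloring must consist of row classes, you show that the presence of any small class already forces $k$ past the $K_{m(n)}$ threshold). What the paper's route buys is that the machinery is needed anyway to determine $\chi_{r=}^*(K_m\times K_n)$ exactly for all $m,n,r$, of which Theorem \ref{equ} is then a quick consequence.
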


\section{Proof of Theorems \ref{main} and  \ref{equ}}
\begin{lemma}\cite{yen2013}\label{empty}
For any integer $r\ge 1$, $I_n$ has an $r$-equitable $k$-coloring with color classes of sizes between $m$ and $m+r$ if and only  $mk\le n\le(m+r)k$.
\end{lemma}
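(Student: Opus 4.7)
The plan is to split the equivalence into its two directions, both of which reduce to elementary counting since $I_n$ has no edges.

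For the forward direction, suppose $I_n$ admits an $r$-equitable $k$-coloring whose color classes each have size in the interval $[m, m+r]$. Then summing the class sizes yields $n$, and summing the lower and upper bounds over the $k$ classes immediately gives $mk \le n \le (m+r)k$. No further argument is needed here, and notice the $r$-equitability assumption is never used in this direction — the interval-containment hypothesis is strictly stronger.

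For the reverse direction, assume $mk \le n \le (m+r)k$ and construct a partition explicitly. Since $I_n$ has no edges, \emph{every} partition of $V(I_n)$ into $k$ parts is a proper coloring; moreover, any partition whose class sizes all lie in $[m,m+r]$ has maximum pairwise size difference at most $r$ and is therefore automatically $r$-equitable. So the task reduces to writing $n$ as a sum of $k$ integers, each in $[m,m+r]$. Set $s = n - mk$, so $0 \le s \le rk$, and write $s = qr + t$ with $0 \le t < r$ and $0 \le q \le k$. I would then take $q$ classes of size $m+r$, one class of size $m+t$ (if $q<k$), and the remaining $k-q-1$ classes of size $m$; the degenerate case $q=k$ (forcing $t=0$) is handled by taking all $k$ classes of size $m+r$.

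There is no real obstacle — the content is purely arithmetic bookkeeping, and the only care needed is in the boundary cases $s=0$, $s=rk$, and $t=0$, all of which are absorbed by the case split above.
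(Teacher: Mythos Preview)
Your argument is correct in both directions: the forward direction is immediate by summing class sizes, and your explicit construction in the reverse direction is valid --- the arithmetic $q(m+r) + (m+t) + (k-q-1)m = km + qr + t = km + s = n$ checks out, all sizes lie in $[m,m+r]$, and the boundary case $q=k$ is handled correctly.

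As for comparison with the paper: the paper does not actually prove this lemma. It is stated with a citation to \cite{yen2013}, accompanied only by the remark that the statement here is slightly different from the one in that reference but that the original proof still applies. So your proposal supplies a self-contained elementary argument where the paper simply defers to the literature; there is no methodological difference to discuss beyond that.
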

Lemma~\ref{empty} is slightly different from Lemma 7 in \cite{yen2013}. However, the original proof also applies in this statement.
\begin{lemma}\cite{yen2013}\label{yendis}
For integers $n,r\ge 1$ and $k\ge m\ge 2$, $K_{m(n)}$ is $r$-equitably $k$-colorable if and only if $\Big\lceil\frac{n}{\lfloor k/m\rfloor}\Big\rceil-\Big\lfloor\frac{n}{\lceil k/m\rceil}\Big\rfloor\le r.$
\end{lemma}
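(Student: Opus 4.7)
The plan is to exploit the fact that in $K_{m(n)}$ every independent set is contained in a single part. Consequently, a proper $k$-coloring corresponds exactly to the choice of (i) positive integers $a_1,\ldots,a_m$ with $a_1+\cdots+a_m=k$, recording the number of color classes used inside each of the $m$ parts, together with (ii) for each $i$, a partition of $n$ into $a_i$ positive summands giving the sizes of those classes. The question thus reduces to a purely numerical one: for which $k$ can we choose the profile $(a_i)$ and the local sub-partitions so that all of the resulting class sizes lie within $r$ of one another?

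For the necessity direction, starting from any $r$-equitable $k$-coloring I would read off the profile $a_1,\ldots,a_m$; the pigeonhole principle applied inside part $i$ produces a class of size at least $\lceil n/a_i\rceil$ and a class of size at most $\lfloor n/a_i\rfloor$, so $r$-equitability implies $\lceil n/a_i\rceil-\lfloor n/a_j\rfloor\le r$ for every pair $i,j$. Since $a_1+\cdots+a_m=k$, averaging forces $\min_i a_i\le\lfloor k/m\rfloor$ and $\max_i a_i\ge\lceil k/m\rceil$. Applying the previous inequality with $a_i$ minimal and $a_j$ maximal, and using that both $a\mapsto\lceil n/a\rceil$ and $a\mapsto\lfloor n/a\rfloor$ are weakly decreasing in $a$, we obtain the claimed bound $\lceil n/\lfloor k/m\rfloor\rceil-\lfloor n/\lceil k/m\rceil\rfloor\le r$.

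For sufficiency, I would construct an explicit coloring directly from the hypothesis. Writing $k=mq+s$ with $0\le s<m$, set $s$ of the $a_i$'s equal to $q+1=\lceil k/m\rceil$ and the remaining $m-s$ equal to $q=\lfloor k/m\rfloor$, so that $\sum_i a_i=k$. In each part, split $n$ into $a_i$ classes of sizes $\lceil n/a_i\rceil$ or $\lfloor n/a_i\rfloor$ (which is always achievable by taking $n\bmod a_i$ large classes and the rest small, and is also a direct consequence of Lemma~\ref{empty}). Every class size then lies in the interval $[\lfloor n/\lceil k/m\rceil\rfloor,\,\lceil n/\lfloor k/m\rfloor\rceil]$, whose length is at most $r$ by hypothesis, so the coloring is $r$-equitable.

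The argument is quite short because the complete multipartite structure eliminates any interaction between parts, so the problem factors into a global choice of the profile $(a_i)$ and an independent local partitioning inside each part. The only place that demands mild care is the monotonicity step in the necessity argument, together with the observation that both extremes $\lfloor k/m\rfloor$ and $\lceil k/m\rceil$ really are attained by any sequence of $m$ positive integers summing to $k$; I do not foresee a substantive obstacle beyond ceiling/floor bookkeeping.
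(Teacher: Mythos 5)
The paper offers no proof of this lemma --- it is imported verbatim from \cite{yen2013} --- so there is nothing in-paper to compare against; judged on its own, your argument takes the natural route (every independent set of $K_{m(n)}$ lies in one part, so a coloring is a profile $(a_1,\dots,a_m)$ together with a near-uniform partition of each part, and both directions reduce to the monotonicity of $a\mapsto\lceil n/a\rceil$ and $a\mapsto\lfloor n/a\rfloor$). The sufficiency half is complete as written, including the degenerate case $a_i>n$, which your explicit ``$n\bmod a_i$ large classes, the rest small'' recipe handles by producing empty classes.

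The one genuine gap is in the necessity half, and it originates in your opening reduction to ``partitions of $n$ into $a_i$ \emph{positive} summands'': this silently assumes every color class is nonempty. Under the paper's definition a $k$-coloring is just a map $V(G)\to[k]$, so classes may be empty, and the lemma actually needs that convention --- for $k>mn$ the right-hand side can still hold (with $\lfloor n/\lceil k/m\rceil\rfloor=0$), yet every $r$-equitable $k$-coloring must then use empty classes. If $\ell>0$ classes are empty, then $\sum_i a_i=k-\ell$, so $\min_i a_i\le\lfloor k/m\rfloor$ survives but $\max_i a_i\ge\lceil k/m\rceil$ can fail, and the chain of inequalities you apply to the maximal $a_j$ breaks. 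The repair is short and you should include it: if some class is empty, $r$-equitability forces every class to have size at most $r$, hence $\lceil n/\lfloor k/m\rfloor\rceil\le\lceil n/\min_i a_i\rceil\le r$ (the middle quantity being a lower bound on the largest class in the part realizing $\min_i a_i$), and the desired inequality follows because $\lfloor n/\lceil k/m\rceil\rfloor\ge 0$; if no class is empty, your argument applies unchanged.
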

\begin{lemma} \label{inequ}
Let $m,n$ and $r$ be positive integers.\\
\textup{(1)} If $n\equiv 1,2,\ldots m-1~(\textup{mod}~m+r)$ then $n-r\big\lfloor\frac{n}{m+r}\big\rfloor<m\big\lceil\frac{n}{m+r}\big\rceil$.\\
\textup{(2)} If $n\equiv 0,m~(\textup{mod}~m+r)$ then $n-r\big\lfloor\frac{n}{m+r}\big\rfloor=m\big\lceil\frac{n}{m+r}\big\rceil$.\\
\textup{(3)} If $n\equiv m+1,\ldots,m+r-1~(\textup{mod}~m+r)$ then $n-r\big\lfloor\frac{n}{m+r}\big\rfloor>m\big\lceil\frac{n}{m+r}\big\rceil$.
\end{lemma}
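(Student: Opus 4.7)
The plan is to reduce all three inequalities to a single arithmetic identity by writing $n$ in its division form modulo $m+r$. Set $q=\lfloor n/(m+r)\rfloor$ and $t=n-(m+r)q$, so that $0\le t\le m+r-1$. Then the left-hand side simplifies cleanly:
\[
n-r\bigl\lfloor\tfrac{n}{m+r}\bigr\rfloor=(m+r)q+t-rq=mq+t.
\]

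For the right-hand side I would split on whether $t=0$. If $t=0$, then $\lceil n/(m+r)\rceil=q$, so $m\lceil n/(m+r)\rceil=mq$. Otherwise, i.e.\ when $1\le t\le m+r-1$, we have $\lceil n/(m+r)\rceil=q+1$, so $m\lceil n/(m+r)\rceil=mq+m$.

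With both sides in hand, the three cases are immediate. For (2), the residue $t=0$ gives $mq+t=mq=m\lceil n/(m+r)\rceil$, while the residue $t=m$ gives $mq+m=m\lceil n/(m+r)\rceil$. For (1), the hypothesis $t\in\{1,\ldots,m-1\}$ yields $mq+t<mq+m=m\lceil n/(m+r)\rceil$. For (3), the hypothesis $t\in\{m+1,\ldots,m+r-1\}$ yields $mq+t>mq+m=m\lceil n/(m+r)\rceil$.

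There is no real obstacle here: once the substitution $n=(m+r)q+t$ is made, the lemma collapses to a case check on $t$, and no further ideas are needed beyond carefully tracking whether the ceiling equals $q$ or $q+1$.
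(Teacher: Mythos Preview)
Your proof is correct and essentially identical to the paper's own argument: both write $n=(m+r)s+t$ (you use $q$ for $s$), compute the left side as $ms+t$ and the right side as $ms$ or $ms+m$ according to whether $t=0$, and then read off the three cases. There is no meaningful difference in approach.
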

\begin{proof}
Let $n=(m+r)s+t$ with $0\le t\le m+r-1$. Clearly, $n-r\big\lfloor\frac{n}{m+r}\big\rfloor=(m+r)s+t-rs=ms+t$ and
\begin{equation*}
m\Big\lceil\frac{n}{m+r}\Big\rceil=\begin{cases} ms&$if~$ t=0,\\
ms+m&$if~$ t=1,\ldots,m+r-1.
\end{cases}
\end{equation*}
Therefore, \\
(1) if $1\le t\le m-1$ then $n-r\big\lfloor\frac{n}{m+r}\big\rfloor=ms+t<ms+m=m\big\lceil\frac{n}{m+r}\big\rceil$,\\
(2) if $t=0$ or $t=m$ then $n-r\big\lfloor\frac{n}{m+r}\big\rfloor=ms+t=m\big\lceil\frac{n}{m+r}\big\rceil$,\\
(3) if $m+1\le t\le m+r-1$ then $n-r\big\lfloor\frac{n}{m+r}\big\rfloor=ms+t>ms+m=m\big\lceil\frac{n}{m+r}\big\rceil$.\\
This proves the lemma.
\end{proof}
The following lemma determines an upper bound for $\chi_{r=}^*(K_m\times K_n)$, a generalization of Lemma \ref{upperbound1}.
\begin{lemma}\label{upperboundr}
For positive integers $m\le n$ and $r$, we have $\chi_{r=}^*(K_m\times K_n)\le \textup{min}\{n-r\big\lfloor\frac{n}{m+r}\big\rfloor,m\big\lceil\frac{n}{m+r}\big\rceil\}$.
\end{lemma}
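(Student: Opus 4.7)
My plan is to exhibit, for each integer $k \ge M := \min\{M_1, M_2\}$ with $M_1 := n - r\lfloor n/(m+r)\rfloor$ and $M_2 := m\lceil n/(m+r)\rceil$, an explicit $r$-equitable $k$-coloring of $K_m \times K_n$. The structural fact I rely on is that the independent sets of $K_m \times K_n$ are precisely the subsets of a single row (an $I_n$) or a single column (an $I_m$), so Lemma~\ref{empty} governs every row or column partition I could want. Writing $n = (m+r)s + t$ with $0 \le t \le m+r-1$ gives $M_1 = ms + t$ and $M_2 \in \{ms, m(s+1)\}$, and Lemma~\ref{inequ} tells me which of $M_1, M_2$ is smaller in each residue class of $t$.

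The core construction is hybrid: choose $c \in \{0, \ldots, n\}$ (the number of full columns used as color classes, each of size $m$), then partition each of the $m$ rows, restricted to its $n - c$ remaining vertices, into $q_i$ parts whose sizes all fall in a window $[a, a+r]$ with $a \le m \le a+r$, using Lemma~\ref{empty}. My step-by-step plan is as follows. \emph{Step~1} (base case $k = M$): when $M = M_1$, take $c = t$ and $q_i = s$ for every row, giving $t + ms = M_1$ classes with sizes in $[m, m+r]$ (the $s$ row parts are forced to size $m+r$ each); when $M = M_2 < M_1$ (Case~(3) of Lemma~\ref{inequ}), take $c = 0$ and $q_i = s+1$, with sizes in $[m, m+r]$, which is feasible since $m(s+1) \le n \le (m+r)(s+1)$. \emph{Step~2} (moderate $k$, namely $M < k \le n$): keep the $q_i$'s constant and raise $c$ by $k - M$, so each row's parts merely shrink within the window $[m, m+r]$; the window remains valid as long as the average row-part stays $\ge m$, i.e., as long as $k \le n$, and allowing the $q_i$'s to differ by at most one across rows covers every residue of $k \bmod m$.

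The main obstacle I anticipate is \emph{Step~3}, the regime $k > n$. There the average class size $mn/k$ falls below $m$, so the window $[m, m+r]$ breaks down: I would slide the window downward to $[m-r, m]$ (and further for still larger $k$), allowing full column classes to be subdivided into shorter independent column pieces by applying Lemma~\ref{empty} to each $I_m$, while the $q_i$'s absorb the remaining excess. The real difficulty is combinatorial rather than conceptual: for every such $k$ one must produce compatible parameters $(c, q_1, \ldots, q_m, a)$ satisfying $c + \sum_i q_i = k$, $a q_i \le n - c \le (a+r) q_i$ for every $i$, and $a \le m \le a+r$. A case split on $k \bmod m$, exploiting the one-unit flexibility of the $q_i$'s across rows together with the elementary comparisons collected in Lemma~\ref{inequ}, should close every residue cleanly.
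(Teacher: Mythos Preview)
Your Steps~1 and~2 are correct and reproduce exactly the construction the paper gives: pick $c$ full columns of size $m$ as color classes and, on the remaining $n-c$ vertices of each row, apply Lemma~\ref{empty} with a fixed number of parts ($s$ when $0\le t\le m$, and $s+1$ when $m+1\le t\le m+r-1$). The feasibility window $m\le (n-c)/q_i\le m+r$ holds precisely for $M\le k\le n$, as you note. (The remark about letting the $q_i$ differ by one across rows is harmless but unnecessary: the column count $c$ already absorbs any residue of $k$ modulo $m$.)

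The gap is Step~3. You flag $k>n$ as the ``main obstacle'' and sketch a sliding-window/column-splitting scheme, but you do not actually carry it out, and the parameter search you describe is not obviously closable in general (for instance, splitting a single $I_m$ column into two parts of size at least $m-r$ forces $m\le 2r$). The paper sidesteps this regime entirely in one line: since every $1$-equitable coloring is $r$-equitable, $\chi_{r=}^*(K_m\times K_n)\le \chi_=^*(K_m\times K_n)\le\big\lceil\frac{mn}{m+1}\big\rceil\le n$ by Lemma~\ref{upperbound1}. Hence one may assume $k\le n$ from the outset, and your Steps~1--2 already finish the proof. Replace your Step~3 with this citation and the argument is complete.
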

\begin{proof}
Let $\Gamma=\textup{min}\{n-r\big\lfloor\frac{n}{m+r}\big\rfloor,m\big\lceil\frac{n}{m+r}\big\rceil\}$ and let $k$ be any integer with $k\ge \Gamma$. We need to show that $K_m\times K_n$ is $r$-equitably $k$-colorable.  Noting  $\chi_{r=}^*(K_m\times K_n)\le \chi_{=}^*(K_m\times K_n)$ and $\big\lceil\frac{mn}{m+1}\big\rceil\le n$, Lemma~\ref{upperbound1} implies $\chi_{r=}^*(K_m\times K_n)\le n$. Therefore, we may assume further $k\le n$ and hence $\Gamma\le k\le n$.  Let $V(K_m\times K_n)=\{(x_i,y_j)\colon\, i\in[m],j\in[ n]\}$ and $n=(m+r)s+t$, where $s=\big\lfloor\frac{n}{m+r}\big\rfloor$.

{\em Case} 1. $n\equiv 0,1,\ldots,m~(\textup{mod}~m+r),$ i.e. $0\le t\le m$.

By Lemma \ref{inequ}, $n-r\big\lfloor\frac{n}{m+r}\big\rfloor\le m\big\lceil\frac{n}{m+r}\big\rceil$ and hence $\Gamma=n-r\big\lfloor\frac{n}{m+r}\big\rfloor=ms+t$. Let $V_j=\{(x_i,y_j)\colon\, i\in[ m]\}$ for  $1\le j\le k-ms$. By the definition of Kronecker products, each $V_j$ is an independent set. Let $n'=n-(k-ms)$. Since $ms+t=\Gamma\le k\le n$, we have $ms\le n'\le n-t=(m+r)s$. Let $U_i=\{(x_i,y_j)\colon\,k-ms+1\le j\le n\}$ for $i=1,2,\ldots,m$. Clearly each $U_i$ is an independent set of size $n'$. By Lemma~\ref{empty}, we can partition each $U_i$ with $i=1,2,\ldots,m$ into $s$ independent sets of sizes between $m$ and $m+r$. Combining $V_1,\ldots, V_{k-ms}$ with these $ms$ independent sets gives an $r$-equitable $k$-coloring of $K_m\times K_n$.

{\em Case} 2. $n\equiv m+1,\ldots,m+r-1~(\textup{mod}~m+r),$ i.e., $m+1\le t\le m+r-1$.

By Lemma \ref{inequ}, $\Gamma=m\big\lceil\frac{n}{m+r}\big\rceil=m(s+1)$ and hence $m(s+1)\le k\le n.$ Let $V_j=\{(x_i,y_j)\colon\,i\in [m]\}$ for $1\le j\le k-m(s+1)$. Clearly, each $V_j$ is an independent set of size $m$. Let $n'=n-(k-m(s+1)).$ Since $m(s+1)\le k\le n$, we have $m(s+1)\le n'\le n=(m+r)s+t\le (m+r)(s+1)$. Let $U_i=\{(x_i,y_j)\colon\,k-m(s+1)+1\le j\le n\}$ for $i=1,2,\ldots,m$. Clearly each $U_i$ is an independent set of size $n'$. By Lemma~\ref{empty}, we can partition each $U_i$ with $i=1,2,\ldots,m$ into $s+1$ independent sets of sizes between $m$ and $m+r$. Combining $V_1,\ldots,V_{k-m(s+1)}$ with these $m(s+1)$ independent sets gives an $r$-equitable $k$-coloring of $K_m\times K_n$.
\end{proof}
\begin{lemma}\label{minusi}
If $m,n,r$ and $\theta$ are positive integers with $m\ge 2$ and $\big\lfloor\frac{n}{\theta+1}\big\rfloor<\big\lceil\frac{n}{\theta+r}\big\rceil$, then $K_{m(n)}$ is not $r$-equitably $\big(m\big\lceil\frac{n}{\theta+r}\big\rceil-i\big)$-colorable for $1\le i<m$.
\end{lemma}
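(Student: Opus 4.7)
My plan is to invoke Lemma~\ref{yendis}, which states that $K_{m(n)}$ is $r$-equitably $k$-colorable if and only if $\lceil n/\lfloor k/m\rfloor\rceil-\lfloor n/\lceil k/m\rceil\rfloor\le r$. Writing $\alpha:=\lceil n/(\theta+r)\rceil$ and, for a fixed $i$ with $1\le i\le m-1$, setting $k:=m\alpha-i$, it therefore suffices to verify the strict inequality $\lceil n/\lfloor k/m\rfloor\rceil-\lfloor n/\lceil k/m\rceil\rfloor>r$.

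The first step is to evaluate the two outer floor/ceiling expressions. Since $m\ge 2$ and $1\le i\le m-1$, we have $k/m=\alpha-i/m$ with $i/m\in(0,1)$, so $\lfloor k/m\rfloor=\alpha-1$ and $\lceil k/m\rceil=\alpha$. The target inequality becomes $\lceil n/(\alpha-1)\rceil-\lfloor n/\alpha\rfloor>r$.

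The second step is to extract the needed bounds from the hypotheses. From $\lfloor n/(\theta+1)\rfloor<\alpha$ we get $n/(\theta+1)<\alpha$, i.e.\ $n<\alpha(\theta+1)$, and hence $\lfloor n/\alpha\rfloor\le\theta$. From $\alpha=\lceil n/(\theta+r)\rceil$ we have $(\alpha-1)(\theta+r)<n$, so $n/(\alpha-1)>\theta+r$; since $\theta+r$ is an integer, this forces $\lceil n/(\alpha-1)\rceil\ge\theta+r+1$. Subtracting yields $\lceil n/(\alpha-1)\rceil-\lfloor n/\alpha\rfloor\ge(\theta+r+1)-\theta=r+1>r$, and Lemma~\ref{yendis} then completes the argument.

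The only fragile point is the degenerate case $\alpha=1$, in which $\alpha-1=0$ makes $n/(\alpha-1)$ meaningless and Lemma~\ref{yendis} does not apply since $k<m$. But then $k=m-i\le m-1<m=\chi(K_{m(n)})$, so $K_{m(n)}$ is not even properly $k$-colorable and the conclusion is trivial. I do not expect any serious obstacle beyond checking this boundary case and being careful about the direction (strict vs.\ weak) of each floor/ceiling inequality.
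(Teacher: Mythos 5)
Your proof is correct and follows essentially the same route as the paper: apply Lemma~\ref{yendis} with $\lfloor k/m\rfloor=\alpha-1$, $\lceil k/m\rceil=\alpha$, derive $\lfloor n/\alpha\rfloor\le\theta$ and $\lceil n/(\alpha-1)\rceil\ge\theta+r+1$ from the hypotheses, and dispose of the case $\alpha=1$ separately, just as the paper does for $q=1$. Your use of $\lceil n/(\theta+r)\rceil=\alpha\Rightarrow(\alpha-1)(\theta+r)<n$ even spares the paper's small detour through showing $\theta+r\nmid n$, but the argument is otherwise identical.
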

\begin{proof}
Let $q=\big\lceil\frac{n}{\theta+r}\big\rceil$. If $\theta+r\mid n$ then $\big\lceil\frac{n}{\theta+r}\big\rceil=\frac{n}{\theta+r}\le\frac{n}{\theta+1}$, yielding $\big\lceil\frac{n}{\theta+r}\big\rceil\le \lfloor\frac{n}{\theta+1}\big\rfloor$, a contradiction to the assumption of this lemma. Hence $\theta+r\nmid n$.  Now we have
$q=\big\lceil\frac{n}{\theta+r}\big\rceil>\frac{n}{\theta+1}\ge\frac{n}{\theta+r}>\big\lfloor\frac{n}{\theta+r}\big\rfloor=q-1.$ Consequently, $\frac{n}{q}<\theta+1$ and $\frac{n}{q-1}>\theta+r$. Note that we may assume $q-1\neq 0$ since  the lemma trivially follows  when $q=1$. Therefore, $\Big\lceil\frac{n}{\lfloor(mq-i)/m\rfloor}\Big\rceil- \Big\lfloor\frac{n}{\lceil(mq-i)/m\rceil}\Big\rfloor=\big\lceil\frac{n}{q-1}\big\rceil-\big\lfloor\frac{n}{q}\rfloor\ge(\theta+r+1)-\theta=r+1$ for $1\le i<m$. By Lemma \ref{yendis}, $K_{m(n)}$ is not $r$-equitably $\big(m\big\lceil\frac{n}{\theta+r}\big\rceil-i\big)$-colorable.
\end{proof}
\begin{lemma}\label{thetaprime}
For positive integers $m\ge 2$, $s,\theta,n$ and $r$, if $K_{m(n)}$ is not $r$-equitably $k$-colorable for some $k\ge m\big\lceil\frac{n}{\theta+r}\big\rceil$, then there is a positive integer $\theta'$ such that $\big\lfloor\frac{n}{\theta'+1}\big\rfloor<\big\lceil\frac{n}{\theta'+r}\big\rceil$, $\big\lceil\frac{n}{\theta'+r}\big\rceil=\big\lceil\frac{k}{m}\big\rceil$ and $\theta'<\theta$.
\end{lemma}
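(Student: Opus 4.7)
The plan is to invoke Lemma \ref{yendis} to reformulate the non-colorability of $K_{m(n)}$ as an arithmetic inequality, then define $\theta' := \lfloor n/(p+1)\rfloor$ with $p+1 := \lceil k/m\rceil$, and verify the three required properties in turn. Two of the three conditions fall out of straightforward bracketing estimates on $n$; the inequality $\theta' < \theta$ requires one small but crucial sharpening of the hypothesis.

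First I would apply Lemma \ref{yendis}: non-colorability of $K_{m(n)}$ by $k$ colors is equivalent to $\lceil n/\lfloor k/m\rfloor\rceil - \lfloor n/\lceil k/m\rceil\rfloor \ge r+1$. If $m\mid k$, the floor and ceiling of $k/m$ coincide and the difference is at most $1$, contradicting $r\ge 1$; hence $m\nmid k$. Setting $p := \lfloor k/m\rfloor$, we have $\lceil k/m\rceil = p+1$, $mp < k \le m(p+1)-1$, and
\[
\lceil n/p\rceil - \lfloor n/(p+1)\rfloor \ge r+1.
\]

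Next I would take $\theta' := \lfloor n/(p+1)\rfloor$, which immediately gives $(p+1)\theta' \le n < (p+1)(\theta'+1)$, while the displayed inequality rewrites as $\lceil n/p\rceil \ge \theta'+r+1$, equivalently $n > p(\theta'+r)$. If $\theta' = 0$ then $n \le p$, forcing $\lceil n/p\rceil \le 1$ and contradicting the displayed bound, so $\theta' \ge 1$. Combining $n > p(\theta'+r)$ with $n < (p+1)(\theta'+1) \le (p+1)(\theta'+r)$ (using $r\ge 1$) pins down $\lceil n/(\theta'+r)\rceil = p+1 = \lceil k/m\rceil$, while $n < (p+1)(\theta'+1)$ yields $\lfloor n/(\theta'+1)\rfloor \le p < p+1 = \lceil n/(\theta'+r)\rceil$.

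The main obstacle is $\theta' < \theta$. The naive inequality $\lceil n/(\theta+r)\rceil \le \lceil k/m\rceil = p+1$ is too weak; instead I would exploit that $m\lceil n/(\theta+r)\rceil$ is a non-negative multiple of $m$, while the hypothesis together with $m\nmid k$ gives $m\lceil n/(\theta+r)\rceil \le k \le m(p+1)-1$. Since no multiple of $m$ lies strictly between $mp$ and $m(p+1)$, this forces $m\lceil n/(\theta+r)\rceil \le mp$, i.e., $\lceil n/(\theta+r)\rceil \le p$. Hence $\theta+r \ge n/p > \theta'+r$, the last inequality being $n > p(\theta'+r)$ rewritten, and so $\theta > \theta'$. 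The delicate point is precisely this sharpening from $\le p+1$ to $\le p$, which is where the divisibility obstruction $m\nmid k$ is used in full.
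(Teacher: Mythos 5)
Your proof is correct and takes essentially the same route as the paper: both apply Lemma \ref{yendis}, take $\theta'=\big\lfloor n/\lceil k/m\rceil\big\rfloor$ (the paper phrases this as choosing $\theta'$ so that a strict chain of inequalities holds), rule out $\theta'=0$, and deduce $\theta'<\theta$ from $\lceil n/(\theta+r)\rceil\le\lfloor k/m\rfloor$. Your final ``sharpening'' is exactly the paper's step $\lfloor k/m\rfloor\ge\lceil n/(\theta+r)\rceil$, which in fact follows directly from $k\ge m\lceil n/(\theta+r)\rceil$ by taking floors, without needing $m\nmid k$.
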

\begin{proof}
By Lemma \ref{yendis},  $\Big\lceil\frac{n}{\lfloor k/m\rfloor}\Big\rceil-\Big\lfloor\frac{n}{\lceil k/m\rceil}\Big\rfloor>r$. Hence,
$\frac{n}{\lfloor k/m\rfloor}>\theta'+r>\theta'+r-1>\cdots>\theta'+1>\frac{n}{\lceil k/m\rceil}$ for some nonnegative integer $\theta'$ and so
\begin{equation}\label{theta}
\Big\lceil\frac{k}{m}\Big\rceil>\frac{n}{\theta'+1}>\cdots>\frac{n}{\theta'+r}>\Big\lfloor\frac{k}{m}\Big\rfloor.
\end{equation}
 If $\theta'=0$ then the first inequality of (\ref{theta}) implies $k>mn$ and hence $K_{m(n)}$ is clearly  $r$-equitably $k$-colorable, a contradiction. Thus, $\theta'>0$. By (\ref{theta}), we see $\big\lceil\frac{k}{m}\big\rceil>\big\lfloor\frac{k}{m}\big\rfloor$ and hence $\big\lceil\frac{k}{m}\big\rceil=\big\lfloor\frac{k}{m}\big\rfloor+1$. Also from (\ref{theta}), we have
  $\big\lceil\frac{n}{\theta'+r}\big\rceil=\big\lceil\frac{k}{m}\big\rceil$ and $\big\lfloor\frac{n}{\theta'+1}\big\rfloor=\big\lfloor\frac{k}{m}\big\rfloor<\big\lceil\frac{n}{\theta'+r}\big\rceil$.
  Finally, $\frac{n}{\theta'+r}>\big\lfloor\frac{k}{m}\big\rfloor\ge \big\lfloor\frac{m}{m}\big\lceil\frac{n}{\theta+r}\big\rceil\big\rfloor=\big\lceil\frac{n}{\theta+r}\big\rceil\ge\frac{n}{\theta+r}$ implying $\theta'<\theta$.
\end{proof}
\begin{lemma}\label{mulkro}
For positive integers $m\ge 2$, $s,\theta,n$ and $r$, if $K_m\times K_n$ is $r$-equitably $k$-colorable for some $k<\textup{min}\{n-r\big\lfloor\frac{n}{m+r}\big\rfloor,m\big\lceil\frac{n}{m+r}\big\rceil\}$, then $K_{m(n)}$ is also $r$-equitably $k$-colorable.
\end{lemma}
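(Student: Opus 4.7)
The plan is to prove the lemma by showing that, under the hypothesis, every class of the assumed $r$-equitable $k$-coloring $\mathcal C$ of $K_m\times K_n$ must be contained in a single row $\{x_i\}\times[n]$, so that $\mathcal C$ is already an $r$-equitable $k$-coloring of the supergraph $K_{m(n)}$. The structural fact I use is that every independent set of $K_m\times K_n$ is either ``row-type'' (contained in a single $\{x_i\}\times[n]$) or ``column-type'' (contained in a single $[m]\times\{y_j\}$), and column-type classes have size at most $m$.

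Let $a$ be the minimum class size in $\mathcal C$, so all sizes lie in $[a,a+r]$ and $ak\le mn\le(a+r)k$. If $a>m$ every class is automatically row-type and we are done. The core of the proof is to rule out $a\le m$. First, from the hypothesis $k<n-r\lfloor n/(m+r)\rfloor\le n$ and $ak\le mn$ one obtains $a\ge m-r+1$, so $a+r\ge m+1$ and, for any column-type class $C$, $a+r-|C|\ge a+r-m\ge 1$.

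Assume toward contradiction that $\mathcal C$ has $q\ge 1$ column-type classes, and let $p=k-q$ be the number of row-type classes. For each row $i$, let $c_i$ be the number of column-type classes touching it and $r_i$ the number of row-type classes in it. The $r_i$ row classes of row $i$ partition $n-c_i$ vertices into parts of sizes in $[a,a+r]$, so Lemma~\ref{empty} yields $r_i\ge\lceil(n-c_i)/(a+r)\rceil$. Summing over $i$ and using $\sum c_i=\sum_\ell|C_\ell|$ together with the slack $a+r-|C_\ell|\ge a+r-m\ge 1$, one arrives at a bound essentially of the shape
\[
k\;=\;q+\sum_{i=1}^m r_i\;\ge\;\Big\lceil\frac{mn+q(a+r-m)}{a+r}\Big\rceil\;\ge\;\Big\lceil\frac{mn+1}{a+r}\Big\rceil.
\]
Comparing this with $a+r\le m+r$ and the decomposition $n=(m+r)s+t$ via Lemma~\ref{inequ} then forces $k\ge\min\{n-r\lfloor n/(m+r)\rfloor,\,m\lceil n/(m+r)\rceil\}$, contradicting the hypothesis and proving $q=0$.

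The main obstacle is the last arithmetic step: the quantity $\lceil(mn+q(a+r-m))/(a+r)\rceil$ depends delicately on $q$, $a$ and the residue $t$, and in several corner cases (for instance $t$ close to $m+r-1$ when $r\ge 2$, or $n\equiv 1\pmod{a+r}$) a single ceiling-adjustment unit is all that is needed to close the gap. I expect to handle these either by splitting into subcases on $t$ and $a$ or by applying the row-level bounds $r_i\ge\lceil(n-c_i)/(a+r)\rceil$ individually on the rows touched by a column-type class, so as to squeeze out the extra unit in the bound.
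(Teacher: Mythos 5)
Your setup is the same as the paper's: every independent set of $K_m\times K_n$ is row-type or column-type, column-type classes have size at most $m$, one assumes $q\ge 1$ column-type classes and counts, per row, how many classes are needed to cover the remaining vertices. The genuine gap is in the displayed counting bound: aggregating the row estimates into a single ceiling, $k\ge\big\lceil\frac{mn+q(a+r-m)}{a+r}\big\rceil$, is strictly weaker than what is needed, and not just by ``a single ceiling-adjustment unit''. Concretely, take $m=3$, $r=3$, $n=15$, so $\min\{n-r\lfloor\frac{n}{m+r}\rfloor,m\lceil\frac{n}{m+r}\rceil\}=9$ and the hypothesis allows $k=8$; with $q=1$ and $a=m=3$ your bound reads $k\ge\lceil 48/6\rceil=8$, so $k=8$ is not excluded and no contradiction results. (The deficit can grow with $m$: for $m=r=5$, $n=10s+5$, $q=1$, $a=5$ your bound gives $k\ge 5s+3$ while the threshold is $5s+5$.) The loss comes exactly from replacing the sum of $m$ per-row ceilings by one ceiling of the sum; since each column-type class meets each row in at most one vertex, the correct per-row bound is $k_i\ge\lceil\frac{n-q}{m+r}\rceil$, giving $k\ge m\lceil\frac{n-q}{m+r}\rceil+q$, and it is this form (with the $m$ ceilings kept separate) that the paper uses.

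Your suggested repair --- ``applying the row-level bounds individually'' --- is indeed the right move and is the paper's route, but it is only gestured at, and the remaining work is the actual substance of the proof: writing $a_q=m\lceil\frac{n-q}{m+r}\rceil+q$, observing $a_{q+m+r}=a_q+r$ so the minimum occurs for $0\le q\le m+r-1$, and then a case analysis on $t=n\bmod (m+r)$ ($0\le t\le m-1$ versus $m\le t\le m+r-1$) showing $\min_q a_q$ equals exactly $n-r\lfloor\frac{n}{m+r}\rfloor=ms+t$ in the first case and $m\lceil\frac{n}{m+r}\rceil=m(s+1)$ in the second, i.e.\ equals the stated threshold; this is what produces the contradiction with $k<\min\{\cdot,\cdot\}$. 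Two smaller points: your bound $a\ge m-r+1$ does not follow from $ak\le mn$ as written but from $mn\le(a+r)k$ together with $k<n$; and once $q\ge1$ you get the needed size bound more directly, since the existence of a class of size at most $m$ forces every class to have size at most $m+r$, which is all the argument requires (the parameter $a$ is unnecessary).
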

\begin{proof}
Let $V(K_m\times K_n)=V(K_{m(n)})=\{(x_i,y_j)\colon\,i\in[m], j\in[n]\}$. Let $c$ be any $r$-equitable $k$-coloring of $K_m\times K_n$ with $k<\textup{min}\{n-r\big\lfloor\frac{n}{m+r}\big\rfloor,m\big\lceil\frac{n}{m+r}\big\rceil\}$. It suffices to show that each color class of $c$ is a subset of $\{(x_i,y_j)\colon\, j\in[n]\}$ for some $i\in [m]$. Let $\ell$ denote the number of color classes, each of which is a subset of $\{(x_i,y_j)\colon\,i\in[m]\}$ for some $j\in[n]$. Note that each independent set of $V(K_m\times K_n)$ is either a subset of $\{(x_i,y_j)\colon\,j\in[n]\}$ for some $i\in[m]$ or a subset of   $\{(x_i,y_j)\colon\, i\in[m]\}$ for some $j\in[n]$. Therefore, we only need to prove $\ell=0$.
Suppose to the contrary that $\ell>0$ and let $U_1,\ldots,U_\ell$ be such color classes defined above. Since any two color classes of $c$ differ in size by at most $r$ and some color class, say $U_1$, contains at most $m$ vertices, each color class is of size at most $m+r$. For each $i\in [m]$, let $k_i$ be the number of color classes contained in $W_i=\{(x_i,y_j)\colon\, j\in[n]\}\setminus \bigcup_{p=1}^\ell U_p$. Since $|W_i|\ge n-\ell$, we have
$k_i\ge\big\lceil\frac{n-\ell}{m+r}\big\rceil$. Therefore, $k=k_1+\cdots+k_m+\ell\ge m\big\lceil\frac{n-\ell}{m+r}\big\rceil+\ell$.

Define $a_q=m\big\lceil\frac{n-q}{m+r}\big\rceil+q$ for $q\ge 0$. Since $a_{q+m+r}=m\big\lceil\frac{n-q-m-r}{m+r}\big\rceil+q+m+r=m\big\lceil\frac{n-q}{m+r}\big\rceil+q+r=a_q+r>a_q$, the minimum of $\{a_q\colon\,q\ge 0\}$ exists and is achieved by $a_q$ for some $q\in\{0,1,\ldots,m+r-1\}$. Therefore, $k\ge a_\ell\ge \textup{min}\{a_0,a_1,\ldots,a_{m+r-1}\}$. Let $n=(m+r)s+t$ with $s=\big\lfloor\frac{n}{m+r}\big\rfloor$. Now, $a_q=  m\big\lceil\frac{n-q}{m+r}\big\rceil+q=ms+m\big\lceil\frac{t-q}{m+r}\big\rceil+q$. We shall show either of the following two cases yields a contradiction.

{\em Case} 1. $0\le t\le m-1.$

We claim in this case that $\textup{min}\{a_0,a_1,\ldots,a_{m+r-1}\}=ms+t$ and hence $k\ge ms+t$. Clearly, $a_t=ms+t$. If $0\le q\le t-1$ then $a_q=ms+m\big\lceil\frac{t-q}{m+r}\big\rceil+q\ge ms+m>ms+t$. If $t+1\le q\le m+r-1$ then $t-q\ge 0-(m+r-1)>-(m+r)$ and hence $a_q=ms+m\big\lceil\frac{t-q}{m+r}\big\rceil+q\ge ms+q>ms+t$. On the other hand, as shown in the proof of Lemma \ref{inequ}, we have $\textup{min}\{n-r\big\lfloor\frac{n}{m+r}\big\rfloor,m\big\lceil\frac{n}{m+r}\big\rceil\}=ms+t$. This is a contradiction to our assumption that $k<\textup{min}\{n-r\big\lfloor\frac{n}{m+r}\big\rfloor,m\big\lceil\frac{n}{m+r}\big\rceil\}$.

{\em Case} 2. $m\le t\le m+r-1.$

We claim in this case that $\textup{min}\{a_0,a_1,\ldots,a_{m+r-1}\}=m(s+1)$ and hence $k\ge m(s+1)$. Clearly,
$a_0=ms+m\big\lceil\frac{t}{m+r}\big\rceil=m(s+1).$ If $1\le q\le t-1$ then $a_q=ms+m\big\lceil\frac{t-q}{m+r}\big\rceil+q\ge ms+m+1>m(s+1)$. If $t\le q\le m+r-1$ then $a_q=ms+m\big\lceil\frac{t-q}{m+r}\big\rceil+q=ms+q\ge ms+t\ge m(s+1).$ Similarly, as shown in the proof of Lemma \ref{inequ}, we have $\textup{min}\{n-r\big\lfloor\frac{n}{m+r}\big\rfloor,m\big\lceil\frac{n}{m+r}\big\rceil\}=m(s+1)$, a contradiction.
\end{proof}
\noindent\textbf{Proof of Theorem \ref{main}.} Denote $\Gamma=\textup{min}\{n-r\big\lfloor\frac{n}{m+r}\big\rfloor,m\big\lceil\frac{n}{m+r}\big\rceil\}.$ We divide the proof into two cases.

{\em Case} 1. $n\equiv 2,\ldots,m-1~(\textup{mod}~m+r)$ and $\Big\lceil\frac{n}{\lfloor n/(m+r)\rfloor}\Big\rceil-\Big\lfloor\frac{n}{\lceil n/(m+r)\rceil}\Big\rfloor>r.$

By Lemmas \ref{upperboundr} and \ref{inequ}, $\chi_{r=}^*(K_m\times K_n)\le\Gamma=n-r\big\lfloor\frac{n}{m+r}\big\rfloor$.
Let $k=n-r\big\lfloor\frac{n}{m+r}\big\rfloor-1$. We need to show that  $K_m\times K_n$ is not $r$-equitably $k$-colorable. Noting $k<\Gamma$, it suffices to show that $K_{m(n)}$ is not $r$-equitably $k$-colorable by Lemma \ref{mulkro}.

By the first condition of this case, let $n=(m+r)s+t$ with $2\le t\le m-1$. We have $k=n-r\big\lfloor\frac{n}{m+r}\big\rfloor-1=ms+t-1$ and hence $ms<k<m(s+1)$. Consequently, $\big\lfloor\frac{k}{m}\big\rfloor=s$ and $\big\lceil\frac{k}{m}\big\rceil=s+1$. Since $\big\lfloor\frac{n}{m+r}\big\rfloor=s$ and $\big\lceil\frac{n}{m+r}\big\rceil=s+1$, we have
$\Big\lceil\frac{n}{\lfloor{k}/{m}\rfloor}\Big\rceil-\Big\lfloor\frac{n}{\lceil{k}/{m}\rceil}\Big\rfloor=
\big\lceil\frac{n}{s}\big\rceil-\big\lfloor\frac{n}{s+1}\big\rfloor=\Big\lceil\frac{n}{\lfloor n/(m+r)\rfloor}\Big\rceil-\Big\lfloor\frac{n}{\lceil n/(m+r)\rceil}\Big\rfloor>r$ from the last condition of this case. Therefore, by Lemma \ref{yendis}, $K_{m(n)}$ is not $r$-equitably $k$-colorable. This completes the proof of this case.

{\em Case} 2.  $n\equiv 0,1,m,m+1,\ldots,m+r-1~(\textup{mod}~m+r)$, or $n\equiv 2,\ldots,m-1~(\textup{mod}~m+r)$ and $\Big\lceil\frac{n}{\lfloor n/(m+r)\rfloor}\Big\rceil-\Big\lfloor\frac{n}{\lceil n/(m+r)\rceil}\Big\rfloor\le r.$

Since $\big\lfloor\frac{n}{\theta+1}\big\rfloor<\big\lceil\frac{n}{\theta+r}\big\rceil$, by Lemma \ref{minusi}, $K_{m(n)}$ is not $r$-equitably $\big(m\big\lceil\frac{n}{\theta+r}\big\rceil-1\big)$-colorable. Since $m\big\lceil\frac{n}{\theta+r}\big\rceil-1<
\textup{min}\{n-r\big\lfloor\frac{n}{m+r}\big\rfloor,m\big\lceil\frac{n}{m+r}\big\rceil\}$,
by Lemma \ref{mulkro}, $K_m\times K_n$ is not $r$-equitably $\big(m\big\lceil\frac{n}{\theta+r}\big\rceil-1\big)$-colorable. In the following, we prove that $K_m\times K_n$ is $r$-equitably $k$-colorable for all $k\ge m\big\lceil\frac{n}{\theta+r}\big\rceil$, which implies $\chi_{r=}^*(K_m\times K_n)=m\big\lceil\frac{n}{\theta+r}\big\rceil$.

Suppose to the contrary that $K_m\times K_n$ (and hence $K_{m(n)}$) is not $r$-equitably $k$-colorable for some $k\ge m\big\lceil\frac{n}{\theta+r}\big\rceil$. By Lemma \ref{upperboundr},
$k<\Gamma$. By Lemma \ref{thetaprime}, there is a positive integer $\theta'$ such that $\big\lfloor\frac{n}{\theta'+1}\big\rfloor<\big\lceil\frac{n}{\theta'+r}\big\rceil$, $\big\lceil\frac{n}{\theta'+r}\big\rceil=\big\lceil\frac{k}{m}\big\rceil$ and $\theta'<\theta$. By the minimality of $\theta$, $m\big\lceil\frac{n}{\theta'+r}\big\rceil>\Gamma$.

 Let  $n=(m+r)s+t$ with $0\le t\le m+r-1$. We show each of the following three subcases yields a contradiction.

{\em Subcase} 2.1. $n\equiv 0,1~(\textup{mod}~m+r)$, i.e., $t=0,1$.

By Lemma \ref{inequ}, $\Gamma=n-r\big\lfloor\frac{n}{m+r}\big\rfloor=ms+t$. Since $k<\Gamma$ we see $k<ms+t\le ms+1$, and hence $k\le ms.$  Therefore, $m\big\lceil\frac{n}{\theta'+r}\big\rceil=m\big\lceil\frac{k}{m}\big\rceil\le ms\le \Gamma$. This is a contradiction.

{\em Subcase} 2.2. $n\equiv m,\ldots,m+r-1~(\textup{mod}~m+r)$, i.e., $t=m,\ldots,m+r-1$.

By Lemma \ref{inequ}, $\Gamma=m\big\lceil\frac{n}{m+r}\big\rceil=m(s+1)$. Hence $k<m(s+1)$ and $m\big\lceil\frac{n}{\theta'+r}\big\rceil=m\big\lceil\frac{k}{m}\big\rceil\le m(s+1)\le \Gamma$. This is a contradiction.

{\em Subcase} 2.3.  $n\equiv 2,\ldots,m-1~(\textup{mod}~m+r)$ and $\Big\lceil\frac{n}{\lfloor n/(m+r)\rfloor}\Big\rceil-\Big\lfloor\frac{n}{\lceil n/(m+r)\rceil}\Big\rfloor\le r.$

By Lemma \ref{inequ}, $\Gamma=n-r\big\lfloor\frac{n}{m+r}\big\rfloor=ms+t$. If $k\le ms$ then $m\big\lceil\frac{n}{\theta'+r}\big\rceil=m\big\lceil\frac{k}{m}\big\rceil\le ms\le \Gamma$, a contradiction. Now assume  $k>ms$. Since $k<\Gamma=ms+t$, we have $ms<k<ms+t<m(s+1)$, yielding $\big\lfloor\frac{k}{m}\big\rfloor=s$ and $\big\lceil\frac{k}{m}\big\rceil=s+1$.  Consequently, by the second condition of this subcase, $\Big\lceil\frac{n}{\lfloor{k}/{m}\rfloor}\Big\rceil-\Big\lfloor\frac{n}{\lceil{k}/{m}\rceil}\Big\rfloor=
\big\lceil\frac{n}{s}\big\rceil-\big\lfloor\frac{n}{s+1}\big\rfloor=\Big\lceil\frac{n}{\lfloor n/(m+r)\rfloor}\Big\rceil-\Big\lfloor\frac{n}{\lceil n/(m+r)\rceil}\Big\rfloor\le r.$  Therefore, $K_{m(n)}$ is $r$-equitably $k$-colorable. This is a contradiction.\qed

\noindent \textbf{Proof of Theorem \ref{equ}.} Comparing Theorem \ref{main} with Lemma \ref{balancer}, it suffices to show, for the first part, that under the  assumption of this theorem, the following two statements hold:\\
  (\romannumeral1) $\Big\lceil\frac{n}{\lfloor n/(m+r)\rfloor}\Big\rceil-\Big\lfloor\frac{n}{\lceil n/(m+r)\rceil}\Big\rfloor\le r$,\\
   (\romannumeral2) if $\big\lfloor\frac{n}{\theta+1}\big\rfloor<\big\lceil\frac{n}{\theta+r}\big\rceil$ then $m\big\lceil\frac{n}{\theta+r}\big\rceil\le \textup{min}\{n-r\big\lfloor\frac{n}{m+r}\big\rfloor,m\big\lceil\frac{n}{m+r}\big\rceil\}$.

By the assumption that $n\ge\frac{1}{r-1}(m+r)(m+2r-1)$, we have $(r-1)\frac{n}{m+r}\ge m+2r-1$, yielding
  $$(r-1)\Big\lfloor\frac{n}{m+r}\Big\rfloor> (r-1)\frac{n}{m+r}-(r-1)\ge (m+2r-1)-(r-1)=m+r.$$  Multiplying the first and last term of the inequality by $\big\lceil\frac{n}{m+r}\big\rceil$ gives $$(r-1)\Big\lfloor\frac{n}{m+r}\Big\rfloor\Big\lceil\frac{n}{m+r}\Big\rceil>(m+r)\Big\lceil\frac{n}{m+r}\Big\rceil\ge n\ge \Big(\Big\lceil\frac{n}{m+r}\Big\rceil-\Big\lfloor\frac{n}{m+r}\Big\rfloor\Big)n.$$
Dividing by $\big\lfloor\frac{n}{m+r}\big\rfloor\big\lceil\frac{n}{m+r}\big\rceil$ leads to $\frac{n}{\lfloor n/(m+r)\rfloor}-\frac{n}{\lceil n/(m+r)\rceil}< r-1$. Hence, $\Big\lceil\frac{n}{\lfloor n/(m+r)\rfloor}\Big\rceil-\Big\lfloor\frac{n}{\lceil n/(m+r)\rfloor}\Big\rfloor<r+1$, which implies (\romannumeral1).

Now we assume further $\big\lfloor\frac{n}{\theta+1}\big\rfloor<\big\lceil\frac{n}{\theta+r}\big\rceil$ and show $m\big\lceil\frac{n}{\theta+r}\big\rceil\le \textup{min}\{n-r\big\lfloor\frac{n}{m+r}\big\rfloor,m\big\lceil\frac{n}{m+r}\big\rceil\}$. If $\frac{n}{\theta+1}-\frac{n}{\theta+r}\ge 1$ then $\big\lfloor\frac{n}{\theta+1}\big\rfloor\ge \big\lfloor\frac{n}{\theta+r}+1\big\rfloor\ge \big\lceil\frac{n}{\theta+r}\big\rceil$, a contradiction. Hence $\frac{n}{\theta+1}-\frac{n}{\theta+r}<1.$ Multiplying by $(\theta+1)(\theta+r)$ gives $(\theta+1)(\theta+r)>(r-1)n\ge (m+r)(m+2r-1)$, implying $\theta>m+r-1$. Hence  $m\big\lceil\frac{n}{\theta+r}\big\rceil\le m\big\lceil\frac{n}{m+r}\big\rceil$. It remains to show $m\big\lceil\frac{n}{\theta+r}\big\rceil\le n-r\big\lfloor\frac{n}{m+r}\big\rfloor$. Since $\theta>m+r-1$ and $n\ge\frac{1}{r-1}(m+r)(m+2r-1)$,  we have
\begin{eqnarray*}
m\Big\lceil\frac{n}{\theta+r}\Big\rceil+r\Big\lfloor\frac{n}{m+r}\Big\rfloor-n
&\le &m\Big\lceil\frac{n}{m+2r-1}\Big\rceil +\Big(r\frac{n}{m+r}-n\Big)\\
&\le& m\Big(1+\frac{n}{m+2r-1}\Big)-m\frac{n}{m+r}\\
&=&m\Big(1-\frac{(r-1)n}{(m+r)(m+2r-1)}\Big)\\
&\le&0,
\end{eqnarray*}
as desired.

Since $K_m\times K_n$ is a spanning subgraph of $K_{m(n)}$, $K_{m(n)}$ has an $r$-equitable $k$-coloring only if $K_m\times K_n$ has an $r$-equitable $k$-coloring. Suppose that $K_m\times K_n$ is $r$-equitably $k$-colorable for some integer $k$. If $k\ge \chi_{r=}^*(K_m\times K_n)$ then $k\ge \chi_{r=}^*(K_{m(n)})$, since $\chi_{r=}^*(K_m\times K_n)=\chi_{r=}^*(K_{m(n)})$, and hence $K_{m(n)}$ is $r$-equitably $k$-colorable.
If $k<\chi_{r=}^*(K_m\times K_n)$, then $k<\textup{min}\{n-r\big\lfloor\frac{n}{m+r}\big\rfloor,m\big\lceil\frac{n}{m+r}\big\rceil\}$
by Lemma \ref{upperboundr}. Therefore, Lemma \ref{mulkro} implies that $K_{m(n)}$ is $r$-equitably $k$-colorable. This completes the proof of Theorem \ref{equ}. \qed

\end{document}